\documentclass[11pt,a4paper,twoside,reqno]{amsart}

\usepackage{amssymb}
\usepackage{latexsym}


\usepackage{graphicx}
\usepackage{a4wide}

\usepackage{amsmath}
\usepackage{amsthm}

\usepackage{color}
\usepackage{caption}
\usepackage{subcaption}
\usepackage{hyperref}
\usepackage{multirow}
\usepackage{booktabs}


\newtheorem{theorem}{Theorem}
\newtheorem{lemma}{Lemma}

\newcommand{\norm}[1]{\left\| #1 \right\|}

\newcommand{\eps}{\varepsilon}

\newcommand{\pt}{\partial_t}
\newcommand{\ptt}{\partial^2_{tt}}

\renewcommand{\i}{\ifmmode\mathit{\mathchar"7010 }\else\char"10 \fi}
\renewcommand{\j}{\ifmmode\mathit{\mathchar"7011 }\else\char"11 \fi}

\newcommand{\Z}{\mathbb{Z}}

\newcommand{\vfi}{\varphi}

\begin{document}\large

\title{A space-time discretization of a nonlinear peridynamic model on a 2D lamina}%

\author[L. Lopez]{Luciano Lopez}

\author[S. F. Pellegrino]{Sabrina Francesca Pellegrino
}

\address[L. Lopez]{Dipartimento di Matematica, Universit\`a degli Studi di Bari Aldo Moro, via E. Orabona 4, 70125 Bari, Italy}
\email{luciano.lopez@uniba.it}

\address[S. F. Pellegrino]{Dipartimento di Management, Finanza e Tecnologia, Universit\`a LUM Giuseppe Degennaro, S.S. 100 Km 18 - 70010 Casamassima (BA), Italy}
\email{pellegrino@lum.it}

\begin{abstract}
Peridynamics is a nonlocal theory for dynamic fracture analysis consisting in a second order in time partial integro-differential equation. In this paper, we consider a nonlinear model of peridynamics in a two-dimensional spatial domain. We implement a spectral method for the space discretization based on the Fourier expansion of the solution  while we consider  the Newmark-$\beta$ method for the time marching.  This computational approach takes advantages from the convolutional form of the peridynamic operator and from the use of the discrete Fourier transform. We show a convergence result for the fully discrete approximation  and study the stability of the method applied to the linear peridynamic model. Finally, we perform several numerical tests and comparisons to validate our results and provide simulations implementing a volume penalization technique to avoid the limitation of periodic boundary conditions due to the spectral approach. 
\end{abstract}

\maketitle

{\bf{\textit{Keywords.}}} nonlinear peridynamics, spectral methods, Newmark-$\beta$ method, nonlocal models.




\section{Introduction}
\label{sec:statement}

Complex fracture problems require accurate prediction of damage behavior or spontaneous cracks formation in anisotropic materials. Classical theory of continuum mechanics is unsuitable for modeling discontinuous phenomena, such as cracks and defects, because it requires the partial derivatives of the displacement field to be known all over the domain, but they do not exist on discontinuities. Non-local theories allow a unique equation to be used both on or off a crack, see~\cite{EE,DSDPR,CoclitePellegrino,BerardiAWR,Pellegrino,DELIA2021}, and recent studies show that differential operators of fractional orders may depict the nature of such phenomena (see for instance~\cite{CDMV,GarrappaSIAM,Garrappa2018,Garrappa2015115,DELIA2013}). 

Peridynamics is a non-local version of the elasticity theory introduced by Silling in~\cite{Silling_2000} to solve discontinuous problems without using partial derivatives. In the bond-based formulation, the motion of a material body is governed by an integro-differential partial equation, where each infinitesimal unit of continuum interacts with other units in its neighborhood directly across finite distance. The use of integral-differential equations instead of the spatial differential equations allow the displacement and internal forces to develop singularities (see~\cite{L,QYX,LSJ,BAC,Er,Alebrahim2016,WECKNER2005705,Delia2017}). 

The theory is non-local because the interactions between material points extend beyond their neighborhood inside a region with finite radius called horizon (see for instance~\cite{Bobaru_2009}). This feature makes it possible to analyze fracture problems involving viscoelastic and cohesive materials.

We fix $[0,T]$, for some $T>0$, as the time domain under investigation. Consider a continuum body with mass density $\rho: V\times[0,T] \to {\mathbb{R}}_+$ occupying a region $V\subset {\mathbb{R}}^2$. Then, the peridynamic model describes the dynamics of the body and its equation is given by
\begin{equation}
\label{eq:perid}
\rho( x) \partial_{tt}^2{ u}( x,t) = \int_{V}  f( x'- x,  u( x',t) -  u( x,t))d x' +  b( x,t),\quad  x\in V,\quad t\in[0,T],
\end{equation}
with initial conditions
\begin{equation}
\label{eq:initcond}
 u( x,0)= u_0(x),\quad \partial_t  u( x,0)= v_0( x),\qquad  x\in V,
\end{equation}
where $ u$ is the displacement field and $ b$ describes all the external forces acting on the material body. The interaction between two material points is described by a response function $f$, called {\bf pairwise force function}, that contains the constitutive law associated with the material. This means that the integrand $ f$ denotes the force density that the particle $ x'$ exerts on the particle $ x$, see for instance~\cite{Silling_2000}. The interaction between $ x$ with all particle in its peridynamic neighborhood is called {\bf bond}, see Figure~\ref{fig:domain-horizon}. We set
\[ \xi =  x' - x,\qquad \eta =  u( x',t) -  u( x,t),\]
which denote the relative position of two particles in the reference configuration and the relative displacement, respectively. Thus $ \xi + \eta$ represents the current relative position vector, and we notice that the pairwise force function $ f$ satisfies Newton's third law and the conservation of the angular momentum:
\begin{equation}
\label{eq:property}
 f(-\xi,-\eta) = - f(\xi,\eta),\qquad \eta\times  f(\xi,\eta) =0.
\end{equation}

\begin{figure}
\centering
\includegraphics[width=.4\textwidth]{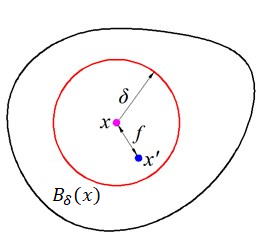}
\caption{The peridynamic domain and the horizon.}
\label{fig:domain-horizon}
\end{figure}

Since peridynamics prescribes finite-range interactions, we assume the existence of a positive cutoff constant $\delta$, such that there are no interactions among material points having relative distance greater than $\delta$ (see~\cite{WECKNER2005705}). Indeed, the state of a material point is influenced by all points in a region of finite radius called {\bf horizon} namely
\[ f(\xi,\eta) = 0,\qquad \text{for $\,|\xi| >\delta\,$ and for every $\eta$.}\]

The value of $\delta$ is a parameter that represents the locality of the interactions. The interactions become more local as $\delta$ decreases. Thus, the classical theory can be thought as the limiting case of peridynamics as $\delta$ goes to zero. 

Additionally, the non-linear peridynamic operator in~\eqref{eq:perid} can be understood as
\[\int_{V}  f( x'- x,  u( x',t) -  u( x,t))d x'  = \int_{V\cap B_{\delta}(x)}  f( x'- x,  u( x',t) -  u( x,t))d x',\]
where $B_{\delta}(x) = \{x'\in\mathbb{R}\,:\,|x-x'|\le \delta\}$.

In what follows, we restrict our attention to the case of an homogeneous bi-dimensional lamina, where the evolution of the material body is given by a class of {\bf nonlinear} peridynamic pairwise force function of convolution type in separable form 
\begin{equation}
\label{eq:f}
 f(\xi,\eta) =  C (\xi) w (\eta),
\end{equation}
where the function $C$ is a non-negative even function, i.e. $C(-\xi) =C(\xi)$, called {\bf micromodulus function}. We assume that $ C ( x,  x') \equiv 0$ for $| x -  x'|>\delta$. While  $ w$ is an odd global Lipschitz continuous function for which there exists a non-negative function $\ell \in L^1(B_{\delta}(0)) \cap L^{\infty} (B_{\delta}(0))$ such that for all $ \xi\in\mathbb{R}^2$, with $| \xi|\le\delta$ and $ \eta$, $\eta'$ there holds
\[| w( \eta') -  w( \eta)| \le\ell(\xi)|\eta'-\eta|.\]

Thus, the model becomes
\begin{equation}
\label{eq:nonlinperi1}
\rho(x) \partial_{tt}^2 u(x,t) = \int_{B_{\delta}(x)} C(x' - x) w\left(u(x',t)- u(x,t)\right)d x' + b(x,t),
\end{equation}
for $ x\in V,\ t\in[0,T],$ with initial conditions
\begin{equation}
\label{eq:incon1}
u(x,0)=u_0(x),\quad \pt u(x,0)=v_0(x),\qquad x\in V.
\end{equation}

In particular, we focus on the case
\begin{equation}
\label{eq:w}
w(\eta) = \eta^r,\quad r\text{ odd},\quad r\ge1.
\end{equation}

We observe that when $r=1$, we obtain the linear case studied in~\cite{CFLMP,WECKNER2005705,Pellegrino2020}. Instead values of $r$ greater than one are useful both from an analytical and a physical point of view, as the power-type nonlinearity in the pairwise force function resembles a fractional derivative (see for instance~\cite{CDMV,LP}), and the well-posedness of the model is achieved in this setting (see~\cite{erbay2012,CDMV}). Additionally, it could be easily generalized to the following more common nonlinearities used in~\cite{CDMV}:
\[f(\xi,\eta) = \frac{|\eta|^{p-2} \eta}{|\xi|^{2+\alpha p}},\qquad p\ge2,\quad\alpha\in(0,1).\]

If we define the nonlinear peridynamic operator of~\eqref{eq:nonlinperi1} as follows
\begin{equation}
\label{eq:L}
\mathcal{L}(u(x,t)) = \int_{V} C( x' - x)\left(u(x', t) - u(x, t)\right)^r\, d x', \quad x\in V,\ t\in[0,T],
\end{equation}
then equations~\eqref{eq:nonlinperi1} and~\eqref{eq:incon1} become
\begin{equation}
\label{eq:nonlinperid}
\begin{cases}
\rho(x) \partial_{tt}^2 u(x,t) = \mathcal{L}(u(x,t)) + b(x,t),\qquad &x\in V,\ t\in[0,T],\\
u(x,0)=u_0(x),\quad v(x,0)=v_0(x),\qquad &x\in V,
\end{cases}
\end{equation}
where $v(x,t) = \pt u(x,t)$.

In order to solve complex problems using the peridynamic theory, a numerical approach is necessary. To discretize in space the peridynamic equation, the most implemented methods are the finite element methods and meshfree methods (see for instance~\cite{Silling_Askari_2005,CFLMP}). Instead, spectral methods, based on truncated Fouries series in space,  result to be very accurate and suitable in the presence of nonlocalities. These techniques rewrite the equations in the Fourier space, transforming derivatives and convolution products into multiplication and reducing the total computational cost of the procedure by using the discrete Fourier transform (DFT)  and the Fast Fourier transform (FFT)  algorithm (see for instance~\cite{LP,CFLMP,Jafarzadeh}).

On the other hand, the time integration of the model can be done by using explicit forward and backward difference techniques (see~\cite{Macek,Lapidus_2003,KILIC2010}). The St\"ormer-Verlet method consists in an explicit central second-order finite difference scheme widely used in elastodynamics and in the context of wave propagation (see for example~\cite{Galvanetto2018,LP,CFLMP,hairer2003,Galvanetto2016}). It is a robust and symplectic scheme simple to implement which preserves geometric properties of the flow, such as the energy of the system, but it requires a restriction on the step size. 

The numerical study of non-local models demands for high accurate solutions, and as explained before, spectral collocation methods can achieve good accuracy. However, the application of explicit time marching schemes make the implementation of spectral space discretization very expansive when we need to study the long time behavior. The implicit time schemes can provide the same accuracy of the explicit ones, but using greater time steps. The Newmark-$\beta$ method, for $0<\beta\le 1/2$, is an implicit second order integrator largely used in continuum mechanics and for structural dynamic problems. It depends on a parameter $\beta$ which let the acceleration of the system to vary in the time interval under consideration. It is unconditionally stable in time for $\beta\in[1/4,1/2]$, and has computational advantages compared to the explicit methods, particularly as problems become stiff (see~\cite{ZAMPIERI2006,LAIER2011}).

In this paper, we apply spectral methods based on the Fourier expansion for the spatial discretization of the 2D peridynamic model~\eqref{eq:nonlinperid} and perform the time integration by the Newmark-$\beta$ method instead of the more standard St\"ormer-Verlet method.

The paper is organized as follows. In Section~\ref{sec:spectral} we describe the spectral Fourier collocation method for spatial discretization of the bi-dimensional domain and we observe that the computational cost can be reduced as the method moves the convolution product to a multiplication. We also recall a convergence result for the semi-discrete problem. Section~\ref{sec:timediscret} introduce the Newmark-$\beta$ method for time marching and contain a proof for the convergence of the fully discrete problem for the case $r=1$ and a stability analysis of the method. Section~\ref{sec:test} is devoted to numerical simulations both with and without the implementation of a volume penalization. A validation is computed by comparing the exact and the numeric solution and by analyzing the relative $L^2$-error. Moreover, we provide comparisons between the performance of the Newmark-$\beta$ method and the St\"ormer-Verlet method used for the time discretization of the model.

\section{Spectral semi-discretization of the problem}
\label{sec:spectral}

In the framework of engineering computation, spectral methods represent a good strategy for the global discretization of partial differential equations as they guarantee high levels of accuracy even when applied to nonlinear problems or when long time integration is necessary (see for instance~\cite{Canuto2006}). 

To obtain a spectral discretization of the spatial domain one can consider a Fourier series expansion of the solution $u(x, t)$ and then makes a truncation of the obtained series expansion. The method requires the assumption of periodic boundary conditions and is often implemented in peridynamic problems where a convolution product appears in the nonlinear integral operator $\mathcal{L}$. Indeed spectral methods allow to transform convolutions to multiplications (see~\cite{CFLMP,LP,alebrahim,Emmrich_Weckner_2007_2,Jafarzadeh,ZHAO2019,Pellegrino2020,Bobaru2021}). For problems with non periodic boundary conditions, one can employ volume penalization techniques as proposed in~\cite{LP,Jafarzadeh}. While, extensions of this spectral discretization to irregular domain are possible (see for instance~\cite{Orovio2006,Guimaraes2020}).

The discretization of 1D spatial domain by means of spectral Fourier methods in the context of peridynamic models have been performed for example in~\cite{LP,CFLMP}.

Instead, in this paper, we consider the spatial domain which is a 2D lamina of $\mathbb{R}^2$ given by $V=[a,b]\times[a,b]$. 

We assume that the mass density is constant in space, and to simplify the notation, we suppose $\rho(x)\equiv 1$. Let $w(\eta)=\eta^r$, for $r$ odd and $r> 1$ and $\delta>0$ be the horizon.

Using the following definition of the periodic convolution product, 
\begin{equation*}
\label{eq:periodic_convolution}
 C\ast_V  u = \int_{V}  C( x- x') u( x',t)\,d x',
\end{equation*}
we rewrite the model~\eqref{eq:L}-\eqref{eq:nonlinperid} as
\begin{align}
\label{eq:peri-convolution}
\ptt  u =& ( C\ast_V  u^r) + \sum_{\ell=1}^{r-1}\binom{r}{\ell}(-1)^{\ell} u^{\ell}\left( C\ast_V  u^{r-\ell}\right)- \gamma u^r +  b\  ,
\end{align}
for  $  x\in V$, $t\in[0,T]$ and where  $\gamma = \int_{-\infty}^{+\infty} \int_{-\infty}^{+\infty} C( x)\,d x$. 
Indeed,
\begin{align*}
\ptt  u(x,t) =& \int_V  C( x- x')\left( u( x',t)- u( x,t)\right)^r\,d x' +  b( x,t)\\
=& \sum_{\ell=0}^r \binom{r}{\ell} (-1)^{\ell}  u^\ell( x,t) \int_V  C( x- x') u^{r-\ell}( x',t)\,d x' +  b( x,t)\\
=& ( C\ast_V  u^r)( x,t) + \sum_{\ell=1}^{r-1}\binom{r}{\ell}(-1)^{\ell} u^{\ell}( x,t)\left( C\ast_V  u^{r-\ell}\right)( x,t)\\
& - \gamma  u^r( x,t) +  b( x,t).
\end{align*}

Hence, the nonlinear peridynamic operator~\eqref{eq:L} becomes
\begin{equation}
\label{eq:Lnew}
\mathcal{L}(u) = ( C\ast_V  u^r) + \sum_{\ell=1}^{r-1}\binom{r}{\ell}(-1)^{\ell} u^{\ell}\left( C\ast_V  u^{r-\ell}\right) -\gamma  u^r, 
\end{equation}
for $x\in V,\ 0\le t\le T$. 

Let $ u( x_1,x_2,t)$ be a real-valued function defined over the periodic domain $V$. Then we can express $ u( x_1,x_2,t)$ by the infinite Fourier series in space
\begin{equation}
\label{eq:fourierseries2d}
  u(x_1,x_2,t) = \sum_{k_1=-\infty}^\infty \sum_{k_2=-\infty}^\infty \hat{ u}(k_1,k_2,t) e^{\Im (k_1 x_1 + k_2 x_2)}, 
\end{equation}
where $(x_1,x_2)\in V$, $t\in[0,T]$, [$\Im$ denotes the imaginary unit $\Im = \sqrt{-1}$].  In \eqref{eq:fourierseries2d} $\hat{ u}(k_1,k_2,t)$ for $k=(k_1,k_2)$ with $k_1$, $k_2\in\Z$  and  $t\in[0,T]$ represents the 2D Fourier coefficients of $u$:
\begin{equation}
\label{eq:coeff2d}
\hat{ u}(k_1,k_2,t) = \int_a^b\int_a^b  u(x_{1},x_{2},t) e^{-\Im (k_1x_{1}+k_2 x_{2})}dx_1dx_2.
\end{equation}
The form~\eqref{eq:fourierseries2d} is the 2D inverse Fourier transform $\mathcal{F}^{-1}$, while equation~\eqref{eq:coeff2d} represents the Fourier transform $\mathcal{F}$ of $u$.

Thanks to the Convolution Theorem, we can compute the periodic convolution in~\eqref{eq:peri-convolution} by means of the inverse Fourier transform $\mathcal{F}^{-1}$ of the product of Fourier coefficients:
\begin{equation}
\label{eq:convth}
 C\ast_V\  u^{r} = \mathcal{F}^{-1}\left(\mathcal{F}(C)\mathcal{F}\left(u^{r}\right)\right).
\end{equation}
Additionally, according to the Inverse Theorem, we obtain
\begin{equation}
\label{eq:Invth}
u^{\ell}\left( C\ast_V u^{r-\ell}\right) = \mathcal{F}^{-1}\left(\mathcal{F}\left( u^{\ell}\right)\ast_V\left(\mathcal{F}( C) \mathcal{F}\left( u^{r-\ell}\right) \right)\right).
\end{equation}

Thus, using~\eqref{eq:convth} and~\eqref{eq:Invth}, the equation~\eqref{eq:peri-convolution} becomes
\begin{align}
\label{eq:method}
\ptt  u =& \mathcal{F}^{-1}\left(\mathcal{F}(C)\mathcal{F}\left( u^r\right) \right) + \sum_{\ell=1}^{r-1} \binom{r}{\ell} (-1)^{\ell} \mathcal{F}^{-1}\left(\mathcal{F}\left( u^{\ell}\right)\ast_V\left(\mathcal{F}( C) \mathcal{F}\left( u^{r-\ell}\right) \right)\right)\\
& -\gamma  u^r +  b.\notag
\end{align}

As a consequence, the integral peridynamic operator $\mathcal{L}$ in~\eqref{eq:Lnew} can be rewritten as follows:
\begin{align}
\label{eq:Lnew1}
\mathcal{L}( u) =& \mathcal{F}^{-1}\left(\mathcal{F}(C)\mathcal{F}\left( u^r\right) \right)  + \sum_{\ell=1}^{r-1} \binom{r}{\ell} (-1)^{\ell} \mathcal{F}^{-1}\left(\mathcal{F}\left( u^{\ell}\right)\ast_V\left(\mathcal{F}( C) \mathcal{F}\left( u^{r-\ell}\right) \right)\right)\\
& - \gamma u^r.\notag
\end{align}

In order to construct the spectral method for~\eqref{eq:method}, we have to approximate $u$ at the collocation points by its truncated Fourier series. Let $\Delta x>0$ be the space step in both directions. We discretize the spatial domain  $V=[a,b]\times[a,b]$ by the equidistant collocation points $ x_{ n} = \left(x_{n_1},\ x_{n_2}\right)\in V$, with $ n =\left(n_1,\ n_2\right)$, such that
\[
x_{n_1} = a + n_1\Delta x,\quad x_{n_2}=a+n_2\Delta x,\quad \text{for}\quad n_1,\,n_2\in\{0,\dots,N\},
\]
where $N=\left\lfloor \frac{b-a}{\Delta x}\right\rfloor$, see Figure~\ref{fig:discret}.

\begin{figure}
\centering
\includegraphics[width=.4\textwidth]{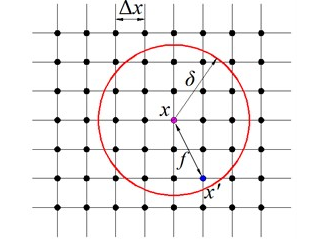}
\caption{The spatial discretization of the peridynamic domain.}
\label{fig:discret}
\end{figure}

Then we can approximate $u$ by the truncated Fourier series $ u^N$
\begin{equation}
\label{eq:fourierseries2d1}
  u^N(x_{1},x_{2},t) = \sum_{k_1=-N}^N \sum_{k_2=-N}^N \tilde{ u}(k_1,k_2,t) e^{\Im (k_1 x_{1} + k_2 x_{2})}, 
\end{equation}
for $t\in[0,T]$. In \eqref{eq:fourierseries2d1} $\tilde{ u}(k_1,k_2,t)$ for $k=(k_1,k_2)$ with $k_1$, $k_2\in\{-N,\dots, N\}$  and  $t\in[0,T]$ represents the 2D discrete Fourier transform (DFT)
\begin{equation}
\label{eq:coeff2d1}
\tilde{ u}(k_1,k_2,t) = \frac{1}{(N+1)^2 c_{k_1} c_{k_2}} \sum_{n_1=0}^{N} \sum_{n_2 =0}^{N}  u^N(x_{n_1},x_{n_2},t) e^{-\Im (k_1x_{n_1}+k_2 x_{n_2})},
\end{equation}
where
\[
c_{k_i} = \begin{cases}
2,&\text{if $k_i=\pm N$},\\
1,&\text{otherwise},
\end{cases}
\quad i=1,\,2.
\]
The form \eqref{eq:fourierseries2d1} evaluated in $(x_{n_1}, x_{n_2})$ is the 2D  inverse discrete Fourier transform (IDFT). 

We notice that the truncated Fourier series $u^N(x_1,x_2,t)$ converges to $u(x_1,x_2,t)$ as $N$ goes to infinity. Moreover, we have that $u^N(x_1,x_2,t)$ represents a discrete interpolant of $u$, in fact 
\[u^N(x_n, t) = u(x_n,t),\]
for $n=(n_1,n_2)$, with $n_1$, $n_2\in\{ 0,\cdots, N\}$, and $t\in[0,T]$   (see~\cite{Canuto2006}).

Often, for the sake of simplicity,  we will use  the following notation: $u^N( x,t)$ instead of  $ u^N(x_1,x_2,t)$  with $x=(x_1,x_2)\in V$  and  $\tilde{u}_k(t)$ instead of $\tilde{ u}(k_1,k_2,t)$ for every $k=(k_1,k_2)$  with $k_1$, $k_2\in\{-N,\dots, N\}$. Moreover, to lighten the notation, we denote the 2D discrete Fourier transform by $\mathcal{F}_N$ and the 2D inverse discrete Fourier transform by $\mathcal{F}_N^{-1}$.


By using the Fourier collocation method  and the definition of the truncated Fourier series, if we replace $u(x,t)$ in ~\eqref{eq:method} with  $u^N(x,t)$  in ~\eqref{eq:fourierseries2d1} and evaluate $u^N(x,t)$ at $x_n$, we obtain the discrete form of the peridynamic operator $\mathcal{L}$ in~\eqref{eq:Lnew1}:

\begin{align}
\label{eq:L-discret}
\mathcal{L}_N( u_n^N) =& \left(\mathcal{F}_N^{-1}\left(\mathcal{F}_N(C)\mathcal{F}_N\left( \left(u_n^N\right)^r\right) \left(\Delta x\right)^2\right)\right) \\ 
& + \left(\sum_{\ell=1}^{r-1} \binom{r}{\ell} (-1)^{\ell} \mathcal{F}_N^{-1}\left(\mathcal{F}_N\left( \left(u_n^N\right)^{\ell}\right)\ast_V\left(\mathcal{F}_N( C) \mathcal{F}_N\left( \left(u_n^N\right)^{r-\ell}\right) \left(\Delta x\right)^2\right)\right)\right)\notag \\
& - \gamma  ( u_n^N)^r\ ,\notag
\end{align}
where $u^N_n(\cdot)$ approximates $u^N(x_n,\cdot)$.

Thus, the spectral semi-discrete method for~\eqref{eq:nonlinperid} becomes
\begin{equation}
\label{eq:compact1}
\begin{cases}
\frac{d^2}{dt^2}  u_n^N = \mathcal{L}_N ( u_n^N)+  b_n, \qquad  \qquad t\in [0,T]\\
 u_n^N(0) =u_0(x_n),\  v_n^N(0)=v_0(x_n),
\end{cases}
\end{equation}
where
\[
 u_0( x_{ n}) = \sum_{ k_1=-N}^N\sum_{k_2=-N}^N \tilde{ u}_{0, k} e^{\Im \left( k_1 x_{ n_1}+ k_2  x_{ n_2}\right)},\qquad v_0( x_{ n}) = \sum_{ k_1=-N}^N\sum_{k_2=-N}^N \tilde{ v}_{0, k} e^{\Im \left( k_1 x_{ n_1}+ k_2  x_{ n_2}\right)},
\]
for each $n=(n_1,n_2)$ with $n_1,\,n_2\in\{0,\dots, N\}$.

The proposed spectral semi-discretization method~\eqref{eq:compact1} can benefit of the Fast Fourier transform (FFT) in order to reduce efficiently its computational cost. Indeed, if we numerically compute the discrete Fourier transform $\mathcal{F}_N$, which appears in~\eqref{eq:L-discret} by means of the FFT function, we find that the complexity of the method is $\mathcal{O}(N^2\log_2^2(N))$ compared with $\mathcal{O}(N^4)$ for the conventional quadrature formula or peridynamic meshfree and finite element solvers of the 2D problems.


We recall that, due to the interpolant nature of $u^N$, the spectral method~\eqref{eq:compact1} is locally constructed in such a way, on each collocation point, we have
\[u^N_n(t) \approx u(x_n,t),\qquad n=(n_1,n_2),\quad n_1,\,n_2\in\{ 0,\cdots, N\},\]
where $u(\cdot,t)$ is the solution of the problem~\eqref{eq:nonlinperid} at time $t\in[0,T]$.

For the time discretization of this system of ODEs we will consider the Newmark-$\beta$ method and the approximation of $u^N_n(t)$ at a point $t_s$ of the mesh on $[0,T]$ will be denoted by $u^N_{n,s}$.

\subsection{Convergence of the semi-discrete approximation}

In this section we present a convergence result for the spectral semi-discrete problem essentially similar to the one given for the one-dimensional case in~\cite{LP}. In what follows, $M$ denotes a generic positive constant. We denote by $(\cdot,\cdot)$ and $\norm{\cdot}$ the inner product and the norm of $L^2(V)$, respectively, namely, if $u$, $v\in L^2(V)$, then
\[
(u,v) = \int_{V} u(x)v(x)\,dx,\qquad \norm{u}^2 = (u,u).
\]
Let $S_N$ be the space of trigonometric polynomials of degree $N$,
\[
S_N = \text{span}\left\{e^{\Im \left(k_1 x_1 + k_2 x_2\right)}| -N\le k_1,\,k_2\le N, \quad x_1,\,x_2\in[a,b]\right\},
\]
and $P_N: L^2(V) \to S_N$ be an orthogonal projection operator
\[
P_N u(x) = \sum_{k_1=-N}^N\sum_{k_2=-N}^N \tilde{ u}_k e^{\Im \left(k_1 x_1 + k_2 x_2\right)},
\]
such that for any $u\in L^2(V)$, the following equality holds
\begin{equation}
\label{eq:orthogonal}
(u-P_Nu,\varphi) = 0,\quad\text{for every $\varphi\in S_N$}.
\end{equation}
The operator $P_N$ commutes with derivatives in the distributional sense:
\[
\partial_x^q P_N u = P_N\partial_x^q u.
\]
Moreover, for the duality relation between the operators, $P_N$ satisfies
\begin{equation}
\label{eq:duality}
P_N \mathcal{L} = \mathcal{L}_N,\quad\text{and}\quad P_N\mathcal{L}_N = \mathcal{L}.
\end{equation}

We denote by $H^s_p(V)$ the periodic Sobolev space and by $X_s = \mathcal{C}^1\left(H^s_p(V);[0,T]\right)$ the space of all continuous functions in $H_p^s(V)$ whose distributional derivative is also in $H_p^s(V)$, with norm
\[
\norm{u}_{X_s}^2 = \max_{t\in[0,T]}\left(\norm{u(\cdot,t)}^2 + \norm{\partial_t u(\cdot,t)}^2\right), \qquad u\in X_s\ .
\]

The spectral scheme for~\eqref{eq:nonlinperid} with periodic boundary conditions is
\begin{align}
\label{eq:scheme}
\ptt u^N &= P_N \mathcal{L}(u^N) + b
,\\
\label{eq:initial_scheme}
u^N(x,0) &= P_N u_0(x),\quad v^N(x, 0) = P_N v_0(x),
\end{align}
where $u^{N}(\cdot,t)\in S_N$ for every $0\le t\le T$.

The following lemmas are preliminary to the convergence result of the semi-discrete scheme.
\begin{lemma}[see~\cite{Canuto2006}]
\label{lm:rhostima}
For every real $0\le \mu\le s$, there exists a positive constant $L$ such that
\begin{equation}
\label{eq:sobolev}
\norm{u-P_N u}_{H_p^{\mu}(V)} \le L N^{\mu-s}\norm{u}_{H^s_p(V)}, \quad\text{for every $u\in H_p^{s}(V)$}.
\end{equation}
\end{lemma}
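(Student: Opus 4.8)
The plan is to reduce the statement to the Fourier-series characterization of the periodic Sobolev norm and then to exploit the spectral gap created by the truncation. For real $s$ one has the equivalence
\[
\norm{u}_{H^s_p(V)}^2 \simeq \sum_{k\in\Z^2}\left(1+|k|^2\right)^{s}\abs{\tilde u_k}^2 ,
\]
with equality if this weighted $\ell^2$ series is taken as the definition of the norm (equivalently, $H^s_p(V)$ is defined through the multiplier $(1-\Delta)^{s/2}$). Since $P_N$ is exactly the partial-sum operator, $P_Nu=\sum_{|k_1|,|k_2|\le N}\tilde u_k\,e^{\Im(k_1x_1+k_2x_2)}$, the error $u-P_Nu$ is the high-frequency tail $\sum_{k\notin\{-N,\dots,N\}^2}\tilde u_k\,e^{\Im(k_1x_1+k_2x_2)}$, and the same characterization with exponent $\mu$ gives
\[
\norm{u-P_Nu}_{H^\mu_p(V)}^2 \simeq \sum_{k\notin\{-N,\dots,N\}^2}\left(1+|k|^2\right)^{\mu}\abs{\tilde u_k}^2 .
\]

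The key step is a pointwise bound on the Fourier multiplier restricted to the omitted frequencies. Split $\left(1+|k|^2\right)^{\mu}=\left(1+|k|^2\right)^{\mu-s}\left(1+|k|^2\right)^{s}$. For every $k$ not in the truncation box at least one component satisfies $|k_i|\ge N+1$, so $|k|^2\ge(N+1)^2>N^2$, hence $1+|k|^2>N^2$. Because $\mu-s\le0$, raising the larger quantity $1+|k|^2$ to this non-positive exponent can only decrease it, so $\left(1+|k|^2\right)^{\mu-s}\le N^{2(\mu-s)}$ for every such $k$. Substituting this bound and then extending the remaining sum back to all of $\Z^2$,
\[
\norm{u-P_Nu}_{H^\mu_p(V)}^2 \le N^{2(\mu-s)}\sum_{k\in\Z^2}\left(1+|k|^2\right)^{s}\abs{\tilde u_k}^2 \simeq N^{2(\mu-s)}\norm{u}_{H^s_p(V)}^2 ,
\]
and taking square roots yields \eqref{eq:sobolev} with a constant $L$ depending only on $s$ (and $L=1$ for the series definition of the norms).

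The step that requires the most care is purely bookkeeping rather than a genuine obstacle: one should fix the definition of $H^s_p(V)$ for real, possibly non-integer $s$ and $\mu$ via the multiplier $(1-\Delta)^{s/2}$ so that the weighted-series identity above is literally an identity, and then check that all constants are independent of $N$ and of $u$; the normalization of the torus implicit in writing the exponentials as $e^{\Im(k_1x_1+k_2x_2)}$ is absorbed into these constants. With $N\ge1$ the argument above is complete, and the case $N=0$ is trivial under the stated convention on $L$; this is precisely the approximation estimate recorded in~\cite{Canuto2006}.
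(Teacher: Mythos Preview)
Your argument is correct and is exactly the standard proof of this classical approximation estimate; the paper does not supply its own proof but simply cites~\cite{Canuto2006}, where precisely this weighted-$\ell^2$ tail argument appears. There is nothing to add beyond noting that your care about the normalization of the torus and the definition of $H^s_p$ for real $s$ is appropriate, since the paper's exponentials $e^{\Im(k_1x_1+k_2x_2)}$ tacitly assume a $2\pi$-periodic domain and the equivalence constants absorb any rescaling to $[a,b]^2$.
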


\begin{lemma}[see~\cite{Duo2019}]
\label{lm:spectral}
The spectrum of the discrete peridynamic operator $-\mathcal{L}_N$ satisfies the following condition
\begin{equation*}
sp(-\mathcal{L}_N) \subseteq[\lambda_{*},\lambda^{*}\left(\Delta x\right)^2],
\end{equation*}
where $\lambda_{*}$ and $\lambda^{*}$ are positive constants.
\end{lemma}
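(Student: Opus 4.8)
Since a \emph{spectrum} only makes sense when the operator is linear, the relevant case here is the linear model $r=1$ (the one used in the stability analysis), in which \eqref{eq:L-discret} reduces to
\[
\mathcal{L}_N(u^N_n)=\mathcal{F}_N^{-1}\!\bigl(\mathcal{F}_N(C)\,\mathcal{F}_N(u^N_n)\,(\Delta x)^2\bigr)-\gamma\,u^N_n,
\]
i.e. a periodic (circular) convolution on the $(N+1)^2$ collocation points, minus a scalar multiple of the identity. The plan is to diagonalize this operator explicitly by the discrete Fourier basis, read off the finite spectrum, and then bound it from both sides by elementary estimates on the discrete Fourier coefficients of the micromodulus $C$.

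For the diagonalization, the trigonometric grid functions $e_k(x_n):=e^{\Im(k_1x_{n_1}+k_2x_{n_2})}$, $-N\le k_1,k_2\le N$, form a complete set of eigenvectors of every periodic grid convolution. Inserting $u^N_n=e_k(x_n)$ and using that (up to the endpoint weights $c_{k_i}$) $\mathcal{F}_N(e_k)$ is the Kronecker spike at the index $k$, the convolution term collapses to multiplication by the scalar $(\Delta x)^2\widetilde C_k$, where $\widetilde C_k:=\mathcal{F}_N(C)_k\in\mathbb{R}$ (real because $C$ is even). Hence $-\mathcal{L}_N e_k=\bigl(\gamma-(\Delta x)^2\widetilde C_k\bigr)e_k$, so that $sp(-\mathcal{L}_N)$ is the finite set $\{\gamma-(\Delta x)^2\widetilde C_k\}_{k}$ and everything reduces to a two-sided bound on these numbers, uniform in $N$.

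The upper bound is the easy direction: from $C\ge0$ one gets $|\widetilde C_k|\le\widetilde C_0$ for all $k$, and $(\Delta x)^2\widetilde C_0$ is a quadrature approximation of $\int_V C$; because $C\in L^1(B_\delta(0))\cap L^\infty(B_\delta(0))$ this stays bounded as $N\to\infty$, which gives $\gamma-(\Delta x)^2\widetilde C_k\le\lambda^*$ with $\lambda^*$ independent of $\Delta x$ — the nonlocal counterpart of the fact that a peridynamic operator, unlike a differential operator, does not generate a spectrum that blows up as $\Delta x\to0$. For the lower bound one again uses $\widetilde C_k\le\widetilde C_0$, reducing matters to showing that $(\Delta x)^2\widetilde C_0$ stays below $\gamma$ with a controlled slack; this is where the hypotheses on $C$ — nonnegativity, evenness, $\mathrm{supp}\,C\subset B_\delta(0)$ with $\delta$ small relative to $b-a$, and $C\in L^\infty$ — enter, to estimate the grid-quadrature error $\bigl|(\Delta x)^2\widetilde C_0-\gamma\bigr|$ and keep the resulting constant strictly positive with the advertised dependence on $\Delta x$.

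The diagonalization and the upper bound are routine. The delicate step — and the one I expect to be the main obstacle — is the lower bound: it needs a uniform-in-$N$ control of the discrete Fourier coefficient $\widetilde C_0$ (equivalently, of the periodic-grid quadrature error for a micromodulus that is only $L^\infty$ and typically not smooth across $|\xi|=\delta$), carried out with the correct power of $\Delta x$ and checking that the worst mode $k=0$ does not spoil positivity. This is exactly the estimate obtained in~\cite{Duo2019}, to which we refer for the technical details.
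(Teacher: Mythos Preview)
The paper offers no proof of this lemma; it is quoted with a bare citation to \cite{Duo2019}, and in the subsequent arguments it is only ever invoked through the qualitative consequence $-(\mathcal{L}_N\omega,\omega)\ge 0$. So there is no in-paper argument to compare against, and your diagonalisation-via-DFT outline is the natural skeleton one would expect: in the linear case the operator is a periodic grid convolution minus $\gamma I$, the discrete Fourier basis diagonalises it, and the spectrum is the finite set $\{\gamma-\mu_k\}_k$ with $\mu_k$ built from the discrete Fourier coefficients of $C$.

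Where your sketch runs into trouble is that the bounds you actually derive do not reproduce the lemma \emph{as printed}. Your upper bound is of order one (essentially $\gamma$, independent of $\Delta x$) --- and you even remark that this is characteristic of nonlocal operators --- whereas the lemma places the factor $(\Delta x)^2$ on the \emph{upper} endpoint, $sp(-\mathcal{L}_N)\subseteq[\lambda_*,\lambda^*(\Delta x)^2]$. With $\lambda_*,\lambda^*$ fixed positive constants that interval is empty once $\Delta x$ is small, so the printed statement is itself suspect; your own analysis supports the opposite placement, since the $k=0$ eigenvalue is a quadrature error and hence the small one. You seem to feel this tension (you speak of ``the advertised dependence on $\Delta x$'' in the \emph{lower}-bound paragraph) but never confront it explicitly. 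Given that the paper only uses positivity, the honest course is to flag the apparent misprint and establish what is actually required --- nonnegativity of the eigenvalues, which follows once $\mu_k\le\mu_0\le\gamma$ --- rather than to defer the mismatched quantitative inclusion to \cite{Duo2019}.
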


\begin{theorem}
\label{th:convergence}
Let $u\in X_s$, $s\ge 1$, be the solution of the problem~\eqref{eq:nonlinperid} with periodic boundary conditions and initial conditions $u_0$, $v_0\in H_p^s(V)$. Let $u^N$ be the solution of the semi-discrete scheme~\eqref{eq:scheme}-\eqref{eq:initial_scheme}. Assume that $C\in L^{\infty}(V)$, then, for every $T>0$, there exists a constant $M = M(T)$, independent on $N$, such that
\begin{equation}
\label{eq:order_conv}
\norm{u-u^N}_{X_1} \le M(T) \left(\Delta x\right)^{s-1} \norm{u}_{X_s}.
\end{equation}

\end{theorem}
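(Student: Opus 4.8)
The plan is to derive an energy-type estimate for the error $e^N = u - u^N$ and then close it with a Gronwall argument, exploiting the spectral approximation property of Lemma~\ref{lm:rhostima}. First I would split the error using the orthogonal projection $P_N$ by writing $e^N = (u - P_N u) + (P_N u - u^N) =: \eta^N + \theta^N$. The term $\eta^N$ is controlled directly by Lemma~\ref{lm:rhostima}: with $\mu = 1$ one gets $\norm{\eta^N}_{X_1} \le L N^{1-s}\norm{u}_{X_s}$, and since $N \sim (b-a)/\Delta x$, this is of the claimed order $(\Delta x)^{s-1}$. So the real work is to estimate the trigonometric-polynomial part $\theta^N \in S_N$.

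For $\theta^N$ I would write down the equation it satisfies. Applying $P_N$ to the exact equation~\eqref{eq:nonlinperid} and using that $P_N$ commutes with $\partial_t$, one has $\ptt P_N u = P_N \mathcal{L}(u) + P_N b$; subtracting~\eqref{eq:scheme} gives $\ptt \theta^N = P_N\mathcal{L}(u) - P_N\mathcal{L}(u^N) + (P_N b - b)$. In the linear case $r=1$ the operator $\mathcal{L}$ is linear and $P_N \mathcal{L}(u) - P_N \mathcal{L}(u^N) = P_N \mathcal{L}(e^N) = P_N\mathcal{L}(\eta^N) + P_N\mathcal{L}(\theta^N) = \mathcal{L}_N(\theta^N) + P_N\mathcal{L}(\eta^N)$, using the duality relation~\eqref{eq:duality}; the nonlinear case is handled by the Lipschitz property of $w$, writing $\mathcal{L}(u)-\mathcal{L}(u^N)$ as a convolution against $C$ of $w(\cdot)$-differences and bounding by $\ell$ and hence, since $\ell\in L^1\cap L^\infty$ and $C\in L^\infty(V)$, by $M\norm{e^N(\cdot,t)}$ in $L^2(V)$ (here one may also need a local $L^\infty$ or $X_s$ bound on $u, u^N$ to handle the intermediate powers $u^\ell$ when $r>1$). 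Then I multiply the $\theta^N$-equation by $\partial_t \theta^N$, integrate over $V$, and integrate the time-derivative terms, obtaining
\begin{equation*}
\frac{1}{2}\frac{d}{dt}\left(\norm{\partial_t \theta^N}^2 + \text{(elliptic part from }-\mathcal{L}_N)\right) \le \left(P_N\mathcal{L}(\eta^N) + (P_N b - b),\, \partial_t\theta^N\right).
\end{equation*}
By Lemma~\ref{lm:spectral} the quadratic form associated with $-\mathcal{L}_N$ is nonnegative (its spectrum lies in $[\lambda_*, \lambda^*(\Delta x)^2]$), so it can be dropped from the lower bound or kept as a nonnegative contribution; the forcing terms on the right are bounded, via Cauchy--Schwarz and Lemma~\ref{lm:rhostima} again (and the boundedness of $C$), by $M\,(N^{1-s}\norm{u}_{X_s} + \norm{b - P_N b})\cdot\norm{\partial_t \theta^N}$, and in the nonlinear/self-coupling part by $M\norm{\theta^N}\norm{\partial_t\theta^N} \le M(\norm{\theta^N}^2 + \norm{\partial_t\theta^N}^2)$.

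Integrating in time from $0$ to $t$, using that the initial error $\theta^N(\cdot,0) = P_N u_0 - P_N u_0 = 0$ and likewise $\partial_t\theta^N(\cdot,0)=0$ (the scheme is initialized with the exact projected data), and applying Gronwall's inequality yields $\norm{\theta^N}_{X_1}^2 \le M(T)\,(N^{1-s}\norm{u}_{X_s})^2$, where the $b$-consistency term $\norm{b-P_Nb}$ is also $O(N^{1-s})$ under the natural regularity of $b$ (or it vanishes if one assumes $b \in S_N$; in any case one absorbs it into the constant by the same Lemma~\ref{lm:rhostima} estimate). Combining the bounds on $\eta^N$ and $\theta^N$ via the triangle inequality in $X_1$ and converting $N^{1-s}$ to $(\Delta x)^{s-1}$ using $N = \lfloor (b-a)/\Delta x\rfloor$ gives~\eqref{eq:order_conv}. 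The main obstacle I anticipate is the treatment of the nonlinear term when $r>1$: one needs a uniform-in-$N$ bound on $\norm{u^N}$ in a strong enough norm (e.g.\ $L^\infty$ or $H^s$) to linearize the difference $(u')^r - (u^N{}')^r$ via the Lipschitz estimate on $w$ and keep the resulting constant independent of $N$; this is typically obtained from the semi-discrete energy identity together with stability of the scheme, but it is the delicate point and must be set up carefully before the Gronwall step can be applied.
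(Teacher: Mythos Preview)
Your approach is correct and is in fact more detailed than what the paper provides: the paper's own ``proof'' of Theorem~\ref{th:convergence} consists of a single sentence referring the reader to the one-dimensional argument in~\cite{LP}, stating that it extends easily to two dimensions. The energy/Gronwall strategy you outline---splitting $u-u^N$ into the projection error $\eta^N=u-P_Nu$ (handled by Lemma~\ref{lm:rhostima}) and the discrete part $\theta^N=P_Nu-u^N\in S_N$, deriving an evolution equation for $\theta^N$, testing against $\partial_t\theta^N$, and closing with Gronwall---is precisely the standard argument that the cited reference carries out in one dimension, so your proposal is aligned with the paper's intended proof.

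One minor comment: in the nonlinear case $r>1$ you correctly flag the need for a uniform-in-$N$ a~priori bound on $u^N$ in a strong norm to control the difference $\mathcal{L}(u)-\mathcal{L}(u^N)$; the paper (and the reference it points to) implicitly relies on the Lipschitz assumption on $w$ together with $C\in L^\infty$ and the regularity $u\in X_s$, but does not spell out this boundedness argument either. Your identification of this as the delicate step is accurate and goes beyond what the paper itself addresses.
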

For the proof of Theorem~\ref{th:convergence}, we can easily extend to the bi-dimensional case the convergence result of~\cite{LP}. 


\section{The fully discrete problem}
\label{sec:timediscret}

Here we derive the fully discretization of the peridynamic equation~\eqref{eq:nonlinperid} by using the Newmark-$\beta$ method, which is an implicit integrator of the second order in time, largely used in various fields of engineering, in particular in dynamic response systems, elastodynamics and in the context of partial differential equation of wave propagation (see~\cite{ZAMPIERI2006,LAIER2011}). It is implicit for $0<\beta\le1/2$, but it offers the advantage to be unconditionally stable in time   when $\beta\in[1/4,1/2]$.

Let $\Delta t >0$ be the time step and we partition the time interval $[0,T]$ by means of the discrete sequence  $t_s = s\Delta t$, for $s=0,\dots,S_T$, where $S_T = \left\lfloor \frac{T}{\Delta t}\right\rfloor$. We denote by $(u_s^N(\cdot),v_s^N(\cdot))$ the numerical approximation of $(u^N(\cdot,t_s),v^N(\cdot,t_s))$ so that   $(u_s^N(x_n),v_s^N(x_n))=(u_{n,s}^N,v_{n,s} ^N)$.  

For the sake of simplicity, we assume $b\equiv 0$ and $\rho\equiv 1$. We apply the Newmark-$\beta$ method to the semi-discrete problem~\eqref{eq:compact1} by using an extended version of the Cauchy's mean value theorem. The displacement first derivative can be approximated as follows:

\begin{equation}
\label{eq:ut}
v_{s+1}^N = v_s^N + \frac{\Delta t}{2}\left(\mathcal{L}_N(u_s^N) + \mathcal{L}_N(u_{s+1}^N)\right),
\end{equation}
while we obtain the following expression for the displacement
\begin{equation}
\label{eq:utt}
u_{s+1}^N = u_s^N + \Delta t v_s^N + \frac{(\Delta t)^2}{2} u_{tt,\beta}^{N},
\end{equation}
where 
\begin{equation}
\label{eq:ubeta}
u_{tt,\beta}^{N} = (1-2\beta) u_{tt,s+1}^{N} + 2\beta u_{tt,s}^N,\qquad 0\le2\beta\le1,
\end{equation}
and $u_{tt,s}^N$ denotes the second derivative in time of $u^N$ evaluated in $t_s$.

The introduction of the parameter $\beta$ allows the acceleration to vary as $\beta$ varies, and as we will see later, there exists an interval of values for $\beta$ that guarantees the convergence of the fully-discrete problem.

Substituting~\eqref{eq:ubeta} into~\eqref{eq:compact1} and collecting equations~\eqref{eq:utt} and~\eqref{eq:ut}, we get the final expression of the method:
\begin{equation}
\label{eq:newmarkb}
\begin{cases}
u_{s+1}^N = u_s^N + \Delta t v_s^N + (\Delta t)^2\left(\left(\frac{1}{2}-\beta\right)\mathcal{L}_N (u_s^N) + \beta \mathcal{L}_N (u_{s+1}^N)\right),\\
v_{s+1}^N = v_s^N + \frac{\Delta t}{2}\left(\mathcal{L}_N (u_s^N) + \mathcal{L}_N(u_{s+1}^N)\right),\\
u_0^N = u_{n,0},\quad v_0^N = v_{n,0}.
\end{cases}
\end{equation}

Additionally, we can express system~\eqref{eq:newmarkb} in the following way, by eliminating $v_s^N$ and $v_{s+1}^N$:
\begin{align}
\label{eq:trinomial}
\frac{u_{s+1}^N - 2 u_s^N + u_{s-1}^N}{(\Delta t)^2} &=\frac{1}{\Delta t} \left(\frac{u_{s+1}^N-u_s^N}{\Delta t} - \frac{u_s^N - u_{s-1}^N}{\Delta t}\right)\\
&= \frac{1}{\Delta t}\left(v_s^N - v_{s-1}^N\right) + \left(\frac{1}{2}-\beta\right) \mathcal{L}_N (u_s^N) + \beta \mathcal{L}_N (u_{s+1}^N) \notag\\
&\quad-\left(\frac{1}{2}-\beta\right)\mathcal{L}_N (u_{s-1}^N) - \beta \mathcal{L}_N(u_s^N)\notag\\
&=\beta \mathcal{L}_N (u_{s+1}^N) +(1-2\beta)\mathcal{L}_N(u_s^N) + \beta \mathcal{L}_N (u_{s-1}^N).\notag
\end{align}

We observe that, when $\beta=0$, this method coincides with the St\"ormer-Verlet method, which is explicit.

To find the displacement at each time step, we solve the non-linear system
\[F(u_{s+1}^N) = u_{s+1}^N - u_s^N - \Delta t v_s^N - (1-2\beta)\frac{(\Delta t)^2}{2} \mathcal{L}_N (u_s^N) - \beta (\Delta t)^2 \mathcal{L}_N (u_{s+1}^N) = 0,\]
by using, for example, the Newton iterative method.

\subsection{Convergence of the fully discrete approximation}
\label{sec:time-conv}

In this section, we investigate the convergence of the sequence $\{u_s^N\}_{s=0}^{S_T}$ to the exact solution of the problem~\eqref{eq:nonlinperid}. For the sake of simplicity, we limit our attention to the linear problem, namely, we consider the case $w(\eta) = \eta$. Throughout this section the notation  $u(t)$, for each $ t$, denotes a function depending on the space variable, namely $u(t)(\cdot)=u(\cdot,t)$ with $u(\cdot ,t)$ in a suitable space, analogously,   $u_{s}^N$ for each $s$ denotes  a function depending on the space variable. The following Lemmas are preliminary to the convergence result.

\begin{lemma}
\label{lm:sigma1}
Let $u$ be the solution of the problem~\eqref{eq:nonlinperid} with initial condition $u_0$, $v_0\in H^2_p(V)$. Suppose $u\in\mathcal{C}^3\left(H^2_p(V), [0,T]\right)$ and let $\{u_s^N\}_{s=0}^{S_T}$ be the sequence generated by the method~\eqref{eq:newmarkb}, then
\begin{equation}
\label{eq:normsigma1}
\norm{u_1^N-P_N u(\Delta t)}_{H^2_p(V)} \le M\left(\Delta x\right)^2 \left(\norm{u_0}_{H^2_p(V)} + \norm{v_0}_{H^2_p(V)}\right).
\end{equation}
\end{lemma}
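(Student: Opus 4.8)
The plan is to treat $u_1^N$ as a one-step approximation of $P_N u(\Delta t)$ and estimate the local truncation error in the $H^2_p(V)$-norm. First I would write down exactly what the scheme gives for $u_1^N$: from the first line of~\eqref{eq:newmarkb} with $s=0$, using $u_0^N = P_N u_0$, $v_0^N = P_N v_0$ (the initialization~\eqref{eq:initial_scheme} at the collocation points), we have
\[
u_1^N = P_N u_0 + \Delta t\, P_N v_0 + (\Delta t)^2\left(\left(\tfrac12-\beta\right)\mathcal{L}_N(P_N u_0) + \beta\, \mathcal{L}_N(u_1^N)\right).
\]
On the other hand, I would Taylor-expand the exact solution around $t=0$: since $u\in\mathcal{C}^3(H^2_p(V);[0,T])$,
\[
u(\Delta t) = u_0 + \Delta t\, v_0 + \tfrac{(\Delta t)^2}{2}\,\partial_{tt}u(0) + R,\qquad \norm{R}_{H^2_p(V)} \le M(\Delta t)^3,
\]
with the remainder controlled by $\max_{t}\norm{\partial_t^3 u(\cdot,t)}_{H^2_p}$; applying $P_N$ (which commutes with $\partial_t$ and is a bounded projection on $H^2_p$) and using the equation $\partial_{tt}u = \mathcal{L}(u)$ together with the duality relation~\eqref{eq:duality}, i.e. $P_N\mathcal{L}(u_0) = \mathcal{L}_N(u_0)$, gives
\[
P_N u(\Delta t) = P_N u_0 + \Delta t\, P_N v_0 + \tfrac{(\Delta t)^2}{2}\,\mathcal{L}_N(P_N u_0) + P_N R .
\]

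Subtracting, and setting $e_1 = u_1^N - P_N u(\Delta t)$, the $P_N u_0$ and $\Delta t\, P_N v_0$ terms cancel and I am left with
\[
e_1 = (\Delta t)^2\Big[\big(\tfrac12-\beta\big)\mathcal{L}_N(P_N u_0) + \beta\,\mathcal{L}_N(u_1^N) - \tfrac12\mathcal{L}_N(P_N u_0)\Big] - P_N R
= \beta(\Delta t)^2\big(\mathcal{L}_N(u_1^N) - \mathcal{L}_N(P_N u_0)\big) - P_N R.
\]
Now in the linear case $\mathcal{L}_N$ is linear, so $\mathcal{L}_N(u_1^N) - \mathcal{L}_N(P_N u_0) = \mathcal{L}_N(u_1^N - P_N u_0)$, and writing $u_1^N - P_N u_0 = e_1 + (P_N u(\Delta t) - P_N u_0) = e_1 + P_N(u(\Delta t)-u_0)$ with $\norm{P_N(u(\Delta t)-u_0)}_{H^2_p}\le M\Delta t$, I obtain an estimate of the form
\[
\norm{e_1}_{H^2_p(V)} \le \beta(\Delta t)^2\,\norm{\mathcal{L}_N}_{H^2_p\to H^2_p}\,\big(\norm{e_1}_{H^2_p} + M\Delta t\big) + \norm{P_N R}_{H^2_p}.
\]
The operator norm of $\mathcal{L}_N$ on $H^2_p$ is bounded uniformly in $N$ (this follows from $C\in L^\infty$ as in Theorem~\ref{th:convergence}, or from Lemma~\ref{lm:spectral} giving $sp(-\mathcal{L}_N)\subseteq[\lambda_*,\lambda^*(\Delta x)^2]$, so that $\norm{\mathcal{L}_N}\le \lambda^*(\Delta x)^2 \le M$); hence for $\Delta t$ small the factor $\beta(\Delta t)^2\norm{\mathcal{L}_N}$ is $\le \tfrac12$ and can be absorbed on the left. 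This leaves $\norm{e_1}_{H^2_p} \le M(\Delta t)^3 + \norm{P_N R}_{H^2_p} \le M(\Delta t)^3$. Finally I would convert powers of $\Delta t$ into powers of $\Delta x$: since $N = \lfloor(b-a)/\Delta x\rfloor$ and the time step is linked to the mesh, $\Delta t \le M\Delta x$, so $(\Delta t)^3 \le M(\Delta x)^2\Delta t \le M(\Delta x)^2$ on the bounded interval $[0,T]$; and the constant multiplying $(\Delta x)^2$ is a bound on $\norm{u_0}_{H^2_p} + \norm{v_0}_{H^2_p}$ coming from the remainder $R$ via the equation (repeated application of $\mathcal{L}$ on $u_0$, bounded on $H^2_p$), which yields exactly~\eqref{eq:normsigma1}.

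The main obstacle I anticipate is the bookkeeping around the implicit term $\beta\,\mathcal{L}_N(u_1^N)$: because $u_1^N$ appears on both sides, I must first establish an a priori bound $\norm{u_1^N}_{H^2_p}\le M$ (or directly close the estimate for $e_1$ as above, using that $\beta(\Delta t)^2\norm{\mathcal{L}_N}<1$) before the local error estimate is meaningful; this is where the uniform-in-$N$ boundedness of $\mathcal{L}_N$ on $H^2_p$ and the smallness of $\Delta t$ are essential. A secondary point needing care is the precise relationship between $\Delta t$ and $\Delta x$ used to pass from $(\Delta t)^3$ to $(\Delta x)^2$, and the fact that $P_N$ is bounded on $H^2_p$ with a constant independent of $N$ (immediate from Parseval, since $P_N$ only truncates Fourier modes).
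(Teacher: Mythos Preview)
Your approach diverges from the paper's at the crucial point of where the factor $(\Delta x)^2$ originates. In the paper's fully discrete setup the initial datum $u_0^N$ is the trigonometric \emph{interpolant} of $u_0$ (see~\eqref{eq:compact1}, where $u_n^N(0)=u_0(x_n)$), not the $L^2$-projection $P_Nu_0$; hence $\sigma_0:=u_0^N-P_Nu_0\neq0$, and Lemma~\ref{lm:rhostima} yields $\norm{\sigma_0}_{H^2_p}\le M(\Delta x)^2\norm{u_0}_{H^2_p}$ and $\norm{v_0^N-P_Nv_0}_{H^2_p}\le M(\Delta x)^2\norm{v_0}_{H^2_p}$. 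These interpolation/projection errors are the paper's source of the factor $(\Delta x)^2(\norm{u_0}_{H^2_p}+\norm{v_0}_{H^2_p})$. Moreover, the paper expands $u(\Delta t)$ in the Newmark-weighted form $u_0+\Delta t\,v_0+\tfrac{(\Delta t)^2}{2}\big((1-2\beta)u_{tt}(0)+2\beta u_{tt}(\Delta t)\big)+R$, so that via the duality~\eqref{eq:duality} the $(\Delta t)^2$-terms from the scheme and the expansion cancel and one is left with $\sigma_0+\sigma_1=2\sigma_0+\Delta t(v_0^N-P_Nv_0)-R$.

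By contrast, you identify $u_0^N$ with $P_Nu_0$ (so $\sigma_0=0$), use the standard Taylor expansion and an absorption argument for the implicit term, and arrive at the purely temporal estimate $\norm{e_1}_{H^2_p}\le M(\Delta t)^3$. You then convert $(\Delta t)^3$ into $(\Delta x)^2$ by invoking a CFL-type relation $\Delta t\le M\Delta x$. This is the gap: no such link between $\Delta t$ and $\Delta x$ is assumed anywhere in the paper --- Lemma~\ref{lm:normsigmas} and Theorem~\ref{th:tconv} explicitly treat the two as independent parameters and produce bounds of the form $(\Delta x)^2+(\Delta t)^2$. Without that unjustified assumption your argument only gives $\norm{e_1}\le M(\Delta t)^3$, which is not~\eqref{eq:normsigma1}. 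To match the paper you should retain the nonzero $\sigma_0$ and $v_0^N-P_Nv_0$ and invoke Lemma~\ref{lm:rhostima} to produce the $(\Delta x)^2$ factor directly, rather than manufacturing it from a mesh ratio.
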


\begin{proof}
Thanks to the regularity assumptions on $u$ with respect to the time variable, we can apply the Cauchy's mean value theorem to $u(t)$. For all $x\in V$, we have
\begin{align}
\label{eq:Cauchy}
u(\Delta t) &= u_0 + \Delta t v_0+\frac{(\Delta t)^2}{2} \left( (1-2\beta)u_{tt}(0) + 2\beta u_{tt}(\Delta t)\right) + R\\
&= u_0 + \Delta t v_0+\frac{(\Delta t)^2}{2} \left((1-2\beta)\mathcal{L}\left(u_0\right) + 2\beta \mathcal{L}(u(\Delta t))\right) + R,\notag
\end{align}
where $R = \mathcal{O}((\Delta t)^3)$. By~\eqref{eq:newmarkb} for $s=0$, we find
\begin{equation}
\label{eq:us1}
u_1^N = u_0^N + \Delta t v_0^N + \frac{(\Delta t)^2}{2}\left((1-2\beta) \mathcal{L}_N (u_0^N) + 2\beta \mathcal{L}_N(u_1^N)\right).
\end{equation}

We define $\sigma_{s} = u_{s}^N - P_N u(t_s)$. Then, the duality relation~\eqref{eq:duality},~\eqref{eq:Cauchy} and~\eqref{eq:us1} imply
\begin{align}
\label{eq:s0+s1}
\sigma_0 + \sigma_1 & = \sigma_0 + u_1^N - P_N u(\Delta t) \\
&=2\sigma_0 + \left(\Delta t\right)\left(v_0^N - P_N v_0\right) - R.\notag
\end{align}

We make the inner product of~\eqref{eq:s0+s1} with the term $\sigma_0 + \sigma_1$. 
Thus, using the Cauchy's inequality and Lemma~\ref{lm:rhostima}, we find
\begin{align}
\label{eq:innproduct1}
\norm{\sigma_0+\sigma_1}_{H^2_p(V)}^2 &=\left(\sigma_0+\sigma_1,\ \sigma_0+\sigma_1\right)\\
&=\left(2\sigma_0 + \left(\Delta t\right)\left(v_0^N - P_N v_0\right) - R, \ \sigma_0 + \sigma_1\right)\notag\\
&\le M \norm{\sigma_0+\sigma_1}_{H^2_p(V)} \left(\norm{\sigma_0}_{H_p^2(V)} + (\Delta t)\norm{v_0^N - P_N v_0}_{H^2_p(V)} + \left(\Delta t\right)^3\right).\notag\\
&\le M \left(\Delta x\right)^2 \norm{\sigma_0+\sigma_1}_{H^2_p(V)} \left(\norm{u_0}_{H^2_p(V)} + \norm{v_0}_{H^2_p(V)}\right),\notag
\end{align}
for some $M>0$.


Therefore, we conclude
\begin{align}
\label{eq:sigma11}
\norm{\sigma_1}_{H^2_p(V)} &= \norm{u_1^N - P_N u(\Delta t)}_{H^2_p(V)} \le \norm{\sigma_0}_{H^2_p(V)}+ \norm{\sigma_1}_{H^2_p(V)}\\ &\le M (\Delta x)^2 \left(\norm{u_0}_{H^2_p(V)}+ \norm{v_0}_{H_p^2(V)}\right).\notag
\end{align}

\end{proof}

\begin{lemma}
\label{lm:normsigmas}
Let $u$ be the solution of the problem~\eqref{eq:nonlinperid} with initial condition $u_0$, $v_0\in H^2_p(V)$. Suppose $u\in\mathcal{C}^3\left(H^2_p(V), [0,T]\right)$ and let $\{u_s^N\}_{s=0}^{S_T}$ be the sequence generated by the method~\eqref{eq:newmarkb}. If $1/4\le\beta \le 1/2$, then
\begin{equation}
\label{eq:normsigmas}
\norm{u_s^N-P_N u(t_s)}_{H^2_p(V)} \le M\left(\left(\Delta x\right)^2 +\left(\Delta t\right)^2\right),
\end{equation}
for $s=0,\cdots,S_T$ and $M$ is a positive constant independent on $\Delta x$ and $\Delta t$.
\end{lemma}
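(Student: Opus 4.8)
The plan is to establish the error estimate~\eqref{eq:normsigmas} by a discrete energy argument applied to the perturbation $\sigma_s = u_s^N - P_N u(t_s)$, building on the base-case estimate of Lemma~\ref{lm:sigma1}. First I would derive the equation satisfied by $\sigma_s$: subtracting the exact evolution (with a Taylor/Cauchy mean-value remainder $R_s = \mathcal{O}((\Delta t)^3)$ coming from the Newmark discretization of $\ptt u$) from the scheme~\eqref{eq:trinomial}, and using the duality relation~\eqref{eq:duality} together with the fact that $P_N$ commutes with $\mathcal{L}$, one gets a three-term recursion of the form
\begin{equation*}
\sigma_{s+1} - 2\sigma_s + \sigma_{s-1} = (\Delta t)^2\left(\beta\,\mathcal{L}_N(\sigma_{s+1}) + (1-2\beta)\,\mathcal{L}_N(\sigma_s) + \beta\,\mathcal{L}_N(\sigma_{s-1})\right) + R_s,
\end{equation*}
with $\sigma_{s}\in S_N$ for all $s$, $\norm{\sigma_0}_{H^2_p}=0$ (since $u_0^N = P_N u_0$) and $\norm{\sigma_1}_{H^2_p}\le M(\Delta x)^2(\norm{u_0}_{H^2_p}+\norm{v_0}_{H^2_p})$ by Lemma~\ref{lm:sigma1}.

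Next I would introduce the discrete velocity $\tau_{s+1/2} = (\sigma_{s+1}-\sigma_s)/\Delta t$ and define a discrete energy functional adapted to the Newmark scheme, something like
\begin{equation*}
E_{s+1/2} = \norm{\tau_{s+1/2}}^2_{H^2_p} + \left(-\mathcal{L}_N \sigma_{s+1},\sigma_s\right)_{H^2_p} + \left(\beta-\tfrac14\right)(\Delta t)^2\,\norm{\mathcal{L}_N\tau_{s+1/2}}^2_{H^2_p},
\end{equation*}
or a close variant. The point of the last term is that it is the quantity which is nonnegative precisely when $\beta\ge 1/4$; combined with Lemma~\ref{lm:spectral} (which controls $sp(-\mathcal{L}_N)$ and in particular bounds $-\mathcal{L}_N$ as a bounded nonnegative operator with norm $\mathcal{O}((\Delta x)^2)$) it makes $E_{s+1/2}$ equivalent — up to constants independent of $\Delta t$ and $N$ — to $\norm{\tau_{s+1/2}}^2_{H^2_p} + \norm{\sigma_{s+1}}^2_{H^2_p}+\norm{\sigma_s}^2_{H^2_p}$. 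Then I would test the $\sigma$-recursion against $\sigma_{s+1}-\sigma_{s-1}$ in the $H^2_p$ inner product to obtain the energy identity $E_{s+1/2} - E_{s-1/2} = (\text{inner product of } R_s \text{ with } \tau_{s+1/2}+\tau_{s-1/2})$, bound the right-hand side by $\tfrac{M}{\Delta t}\norm{R_s}_{H^2_p}\big(E_{s+1/2}^{1/2}+E_{s-1/2}^{1/2}\big)$, use $\norm{R_s}_{H^2_p}=\mathcal{O}((\Delta t)^3)$ (which requires the $\mathcal{C}^3(H^2_p;[0,T])$ regularity of $u$) so that each step contributes $\mathcal{O}((\Delta t)^2)$ to $E_{s+1/2}^{1/2}$, and conclude by a discrete Gronwall inequality summed over the $S_T = \mathcal{O}(1/\Delta t)$ steps that $E_{s+1/2}^{1/2}\le M\big(E_{1/2}^{1/2} + T\,(\Delta t)^2\big)$. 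Since $E_{1/2}^{1/2}\le M(\Delta x)^2$ by the base case, this yields $\norm{\sigma_s}_{H^2_p}\le M\big((\Delta x)^2+(\Delta t)^2\big)$ as claimed.

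The main obstacle is getting the coercivity of the discrete energy right: one has to choose the cross term and the $(\beta-1/4)$-correction term so that $E_{s+1/2}$ is genuinely equivalent to the full $H^2_p$-norm of $(\sigma_s,\sigma_{s+1},\tau_{s+1/2})$ uniformly in $\Delta t$ and $N$, and this is exactly where the hypothesis $1/4\le\beta\le1/2$ enters (for $\beta<1/4$ the correction term has the wrong sign and one would need a CFL-type restriction on $\Delta t$ relative to the spectral radius $\lambda^*(\Delta x)^2$ of $-\mathcal{L}_N$; the upper bound $\beta\le 1/2$ keeps the scheme from degenerating). A secondary technical point is bookkeeping the remainder: one must verify that the Newmark truncation error is $\mathcal{O}((\Delta t)^3)$ in the $H^2_p$-norm, uniformly in $s$, which is where the assumed time-regularity $u\in\mathcal{C}^3(H^2_p(V);[0,T])$ is used, and then check that the accumulated $S_T\cdot\mathcal{O}((\Delta t)^3)$ contributions combine through Gronwall into the stated $\mathcal{O}((\Delta t)^2)$ — i.e. the loss of one power of $\Delta t$ from summing is offset correctly. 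The rest is routine: Cauchy–Schwarz, the spectral bound of Lemma~\ref{lm:spectral}, the projection estimate of Lemma~\ref{lm:rhostima}, and the discrete Gronwall lemma.
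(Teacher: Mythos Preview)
Your overall strategy—derive a three-term recursion for $\sigma_s=u_s^N-P_Nu(t_s)$, test against $\sigma_{s+1}-\sigma_{s-1}$, build a discrete energy that is coercive for $\beta\ge 1/4$ thanks to Lemma~\ref{lm:spectral}, and close with a discrete Gronwall inequality using Lemma~\ref{lm:sigma1} for the base case—is exactly the paper's approach. The paper's energy is
\[
\Gamma_s=\norm{\vfi_{s+\frac12}}_{H^2_p}^2+\norm{\sigma_{s+1}}_{H^2_p}^2+\norm{\sigma_s}_{H^2_p}^2-4\beta\bigl(\mathcal L_N(\sigma_{s+\frac12}),\sigma_{s+\frac12}\bigr),
\]
which differs in form from your $(\beta-\tfrac14)(\Delta t)^2\norm{\mathcal L_N\tau_{s+1/2}}^2$ correction but serves the same purpose; also, the paper keeps the spatial consistency term $u_{tt}(t_s)-P_Nu_{tt}(t_s)$ explicitly in the forcing (bounded via Lemma~\ref{lm:rhostima}) rather than eliminating it through commutation as you do.

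There is, however, a genuine order-counting slip in your argument. You take $\norm{R_s}_{H^2_p}=\mathcal O((\Delta t)^3)$ and bound the energy increment by $\tfrac{M}{\Delta t}\norm{R_s}\bigl(E_{s+1/2}^{1/2}+E_{s-1/2}^{1/2}\bigr)$, which gives a per-step increment $E_{s+1/2}^{1/2}-E_{s-1/2}^{1/2}\le M(\Delta t)^2$. Summing over $S_T=\mathcal O(1/\Delta t)$ steps then yields $E^{1/2}\le E_{1/2}^{1/2}+M T\,\Delta t$, \emph{not} $MT(\Delta t)^2$ as you claim. To land on the stated $\mathcal O((\Delta t)^2)$ you need the local truncation of the trinomial form~\eqref{eq:trinomial} to be one order better, i.e.\ $\norm{R_s}=\mathcal O((\Delta t)^4)$, equivalently $\tilde R=\mathcal O((\Delta t)^2)$ in the divided form—this is precisely what the paper uses in~\eqref{eq:calc} and~\eqref{eq:1right}. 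So either upgrade the remainder to $\mathcal O((\Delta t)^4)$ (which is the standard second-order Newmark truncation, and is what the paper invokes) or accept only first-order convergence in time; your ``loss of one power from summing is offset correctly'' check does not go through with $R_s=\mathcal O((\Delta t)^3)$ once the $1/\Delta t$ from the energy test is accounted for.
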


\begin{proof}
We observe that the following relations hold
\begin{gather*}
\frac{u(t_{s+1}) - 2 u(t_s) + u(t_{s-1})}{(\Delta t)^2} = u_{tt}(t_s) + \tilde{R},\\
u(t_{s+1}) + 2u(t_s)+u(t_{s-1}) = 4 u(t_s) + \tilde{R},
\end{gather*}
where $\tilde{R}=\mathcal{O}((\Delta t)^2)$ is the rest of the Taylor expansion.

The previous relations, the duality equation~\eqref{eq:duality}, the trinomial recurrence formulation of the method~\eqref{eq:trinomial} and the definition of the problem\eqref{eq:nonlinperid} imply
\begin{align}
\label{eq:calc}
\frac{\sigma_{s+1} - 2 \sigma_s + \sigma_{s-1}}{(\Delta t)^2} \ -& \beta \mathcal{L}_N (\sigma_{s+1}) - (1-2\beta) \mathcal{L}_N(\sigma_s) - \beta \mathcal{L}_N(\sigma_{s-1})\\
=&\frac{u_{s+1}^N - 2 u_s^N + u_{s-1}^N}{(\Delta t)^2} - P_N\left(\frac{u(t_{s+1}) - 2 u(t_s) + u(t_{s-1})}{(\Delta t)^2}\right)\notag\\
&-\beta \mathcal{L}_N(u_{s+1}^N) - (1-2\beta)\mathcal{L}_N(u_s^N) - \beta\mathcal{L}_N(u_{s-1}^N)\notag\\
&+\beta\mathcal{L}_N(P_N u_{s+1}^N) + (1-2\beta)\mathcal{L}_N(P_N u_s^N) + \beta \mathcal{L}_N(P_N u_{s-1}^N)\notag\\
=& -P_N u_{tt}(t_s) + \beta\mathcal{L} (u(t_{s+1})) + (1-2\beta) \mathcal{L}(u(t_s)) + \beta \mathcal{L}(u(t_{s-1}))\notag\\
=& -P_N u_{tt}(t_s) + \beta u_{tt}(t_{s+1}) + 2\beta u_{tt}(t_s) + \beta u_{tt}(t_{s-1})\notag\\
& +u_{tt}(t_s) - 4\beta u_{tt}(t_s)\notag\\
=& - P_N u_{tt}(t_s) + u_{tt}(t_s) + \tilde{R}.\notag
\end{align}

%

Let us define $\vfi_{s+\frac{1}{2}} = \frac{\sigma_{s+1}-\sigma_s}{\Delta t},$ so,
\begin{equation}
\label{eq:phi}
\frac{\sigma_{s+1} - 2 \sigma_s + \sigma_{s-1}}{(\Delta t)^2} = \ \frac{\vfi_{s+\frac{1}{2}} - \vfi_{s-\frac{1}{2}}}{\Delta t},\qquad
\vfi_{s+\frac{1}{2}} + \vfi_{s-\frac{1}{2}} = 2 \ \frac{\sigma_{s+\frac{1}{2}} - \sigma_{s-\frac{1}{2}}}{\Delta t}.
\end{equation}

Now, we consider the inner product of~\eqref{eq:calc} with $(\vfi_{s+\frac{1}{2}} + \vfi_{s-\frac{1}{2}})$. Using the relations~\eqref{eq:phi}, for the first term on the left-side of~\eqref{eq:calc} we find 
\begin{equation}
\label{eq:1left}
\left(\frac{\vfi_{s+\frac{1}{2}} - \vfi_{s-\frac{1}{2}}}{\Delta t},\ \vfi_{s+\frac{1}{2}} + \vfi_{s-\frac{1}{2}}\right) = \frac{1}{\Delta t} \left(\norm{\vfi_{s+\frac{1}{2}}}_{H^2_p(V)}^2 - \norm{\vfi_{s-\frac{1}{2}}}_{H_p^2(V)}^2\right).
\end{equation}
For the second term on the left-side of~\eqref{eq:calc}, using the spectral properties of the discrete peridynamic operator $\mathcal{L}_N$, we get
\begin{align}
\label{eq:2left}
-2&\left(\beta\mathcal{L}_N (\sigma_{s+1}) + (1-2\beta)\mathcal{L}_N (\sigma_{s}) + \beta\mathcal{L}_N(\sigma_{s-1}), \ \vfi_{s+\frac{1}{2}} + \vfi_{s-\frac{1}{2}}\right) \\
 &\qquad = -\frac{4\beta}{\Delta t} \left(\mathcal{L}_N (\sigma_{s+\frac{1}{2}}), \ \sigma_{s+\frac{1}{2}}\right)+\frac{4\beta}{\Delta t}\left(\mathcal{L}_N (\sigma_{s-\frac{1}{2}}),\ \sigma_{s-\frac{1}{2}}\right).\notag
\end{align}

Let us focus on the right-side of~\eqref{eq:calc}. Lemma~\ref{lm:rhostima} and the Cauchy-Schwartz inequality ensure
\begin{align}
\label{eq:1right}
\left((u_{tt}(t_s) - P_N u_{tt}(t_s)), \ \vfi_{s+\frac{1}{2}} + \vfi_{s-\frac{1}{2}}\right) +& \left(\tilde{R},\ \vfi_{s+\frac{1}{2}} + \vfi_{s-\frac{1}{2}}\right) \notag\\
\le M&  \norm{u_{tt}(t_s) - P_N u_{tt}(t_s)}_{H_p^2(V)}\norm{\vfi_{s+\frac{1}{2}} + \vfi_{s-\frac{1}{2}}}_{H^2_p(V)} \\ 
&+ M(\Delta t)^2 \norm{\vfi_{s+\frac{1}{2}} + \vfi_{s-\frac{1}{2}}}_{H^2_p(V)}\notag\\
\le M& \left(\norm{u_{tt}(t_s) - P_N u_{tt}(t_s)}_{H_p^2(V)}^2 + (\Delta t)^4 \right) \notag\\ 
&+ M\left(\norm{\vfi_{s+\frac{1}{2}}}_{H^2_p(V)}^2 + \norm{\vfi_{s-\frac{1}{2}}}_{H^2_p(V)}^2\right)\notag\\
\le M&  \left((\Delta x)^2 + (\Delta t)^4 + \norm{\vfi_{s+\frac{1}{2}}}_{H^2_p(V)}^2 + \norm{\vfi_{s-\frac{1}{2}}}_{H^2_p(V)}^2\right).\notag
\end{align}

Merging~\eqref{eq:1left}, \eqref{eq:2left} and~\eqref{eq:1right} into~\eqref{eq:calc}, we have
\begin{align}
\label{eq:merge}
\frac{1}{\Delta t} &\left(\norm{\vfi_{s+\frac{1}{2}}}_{H^2_p(V)}^2-\norm{\vfi_{s-\frac{1}{2}}}_{H^2_p(V)}^2 -4\beta\left(\mathcal{L}_N (\sigma_{s+\frac{1}{2}}),\ \sigma_{s+\frac{1}{2}}\right) + 4\beta\left(\mathcal{L}_N (\sigma_{s-\frac{1}{2}}),\ \sigma_{s-\frac{1}{2}}\right)\right)\\
&\qquad\le M \left((\Delta x)^2 + (\Delta t)^4 + \norm{\vfi_{s+\frac{1}{2}}}_{H^2_p(V)}^2 +\norm{\vfi_{s-\frac{1}{2}}}_{H^2_p(V)}^2\right).\notag
\end{align}

Adding to the both side of inequality~\eqref{eq:merge}, the term
\begin{align*}
\frac{1}{\Delta t} &\left(\norm{\sigma_{s+1}}_{H_p^2(V)}^2 -\norm{\sigma_{s-1}}_{H^2_p(V)}^2\right) = \left(\sigma_{s+1} + \sigma_{s-1}, \vfi_{s+\frac{1}{2}} + \vfi_{s-\frac{1}{2}}\right)\\
&\le M\left(\norm{\sigma_{s+1}}_{H_p^2(V)}^2 + 2\norm{\sigma_s}_{H_p^2(V)}^2 +\norm{\sigma_{s-1}}_{H_p^2(V)}^2 + \norm{\vfi_{s+\frac{1}{2}}}_{H_p^2(V)}^2 +\norm{\vfi_{s-\frac{1}{2}}}_{H_p^2(V)}^2\right),
\end{align*}
we obtain
\begin{align}
\label{eq:sum}
\frac{1}{\Delta t} &\left(\norm{\vfi_{s+\frac{1}{2}}}_{H^2_p(V)}^2-\norm{\vfi_{s-\frac{1}{2}}}_{H^2_p(V)}^2 -4\beta\left(\mathcal{L}_N (\sigma_{s+\frac{1}{2}}),\ \sigma_{s+\frac{1}{2}}\right) + 4\beta\left(\mathcal{L}_N( \sigma_{s-\frac{1}{2}}),\ \sigma_{s-\frac{1}{2}}\right)\right)\\
&\qquad +\frac{1}{\Delta t}\left(\norm{\sigma_{s+1}}_{H_p^2(V)}^2 -\norm{\sigma_{s-1}}_{H^2_p(V)}^2\right)\notag\\
& \le M \left(\norm{\sigma_{s+1}}_{H_p^2(V)}^2 + 2\norm{\sigma_s}_{H_p^2(V)}^2 +\norm{\sigma_{s-1}}_{H_p^2(V)}^2 + \norm{\vfi_{s+\frac{1}{2}}}_{H_p^2(V)}^2 +\norm{\vfi_{s-\frac{1}{2}}}_{H_p^2(V)}^2\right) \notag\\
&\qquad+ M\left((\Delta x)^2 + (\Delta t)^2\right)^2.\notag
\end{align}
We set
\begin{equation}
\label{eq:defgamma}
\Gamma_s = \norm{\vfi_{s+\frac{1}{2}}}_{H^2_p(V)}^2 + \norm{\sigma_{s+1}}_{H^2_p(V)}^2 + \norm{\sigma_s}_{H^2_p(V)}^2 -4\beta \left(\mathcal{L}_N(\sigma_{s+\frac{1}{2}}),\ \sigma_{s+\frac{1}{2}}\right),
\end{equation}
then, we can compact~\eqref{eq:sum} in the following way
\begin{equation*}
\frac{\Gamma_s - \Gamma_{s-1}}{\Delta t} \le M \left((\Delta x)^2 + (\Delta t)^2\right)^2 + M\left(\Gamma_s + \Gamma_{s-1}\right).
\end{equation*}
The Gronwall inequality implies
\begin{equation}
\label{eq:Gronwall}
\Gamma_s \le \left(\Gamma_0 + \sum_{k=1}^s \Delta t\left((\Delta x)^2+(\Delta t)^2\right)^2\right) e^{M t_s \Delta t}.
\end{equation}

Additionally, the same argument as in~\eqref{eq:innproduct1} and~\eqref{eq:innproduct2} allows us to obtain
\begin{equation}
\label{eq:gamma0}
\Gamma_0 \le M\left((\Delta x)^2 + (\Delta t)^2\right)^2.
\end{equation}
Hence, since $t_s\le T$, using the definition of $\Gamma_s$ in~\eqref{eq:defgamma} and collecting~\eqref{eq:Gronwall} and~\eqref{eq:gamma0}, we find
\begin{equation}
\label{eq:chain}
\norm{\sigma_s}_{H_p^2(V)}^2 \le \Gamma_s \le M \left((\Delta x)^2 + (\Delta t)^2\right)^2.
\end{equation}
Therefore, the estimate~\eqref{eq:chain} ensures that
\begin{equation}
\label{eq:sigmas}
\norm{u_s^N-P_Nu(t_s)}_{H^2_p(V)} \le M ((\Delta x)^2 + (\Delta t)^2). 
\end{equation}
\end{proof}

The following convergence result holds.
\begin{theorem}
\label{th:tconv}
Let $u$ be the solution of the problem~\eqref{eq:nonlinperid} with initial condition $u_0$, $v_0\in H^2_p(V)$. Suppose $u\in\mathcal{C}^3\left(H^2_p(V), [0,T]\right)$ and let $\{u_s^N\}_{s=0}^{S_T}$ be the sequence generated by the method~\eqref{eq:newmarkb}. If $1/4\le\beta \le 1/2$, then
\begin{equation}
\label{eq:tdifference}
\norm{u(t_s) - u_s^N}_{H^2_p(V)} \le M \left(\Delta x\right)^2 \left(\norm{u_0}_{H^2_p(V)}+ \norm{u_t}_{L^1(H^2_p(V),0,t_s)} + \norm{v_0}_{H^2_p(V)}\right) +M \left(\Delta t\right)^2,
\end{equation}
where $s=0,\cdots, S_T$ and $M>0$ is a constant depending on the regularity of $u$ and independent on $\Delta x$ and $\Delta t$.
\end{theorem}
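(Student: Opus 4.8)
\textbf{Proof proposal for Theorem~\ref{th:tconv}.} The plan is to route the comparison through the spectral projection $P_N u(t_s)$, which is exactly the object for which Lemma~\ref{lm:normsigmas} provides control. By the triangle inequality,
\[
\norm{u(t_s) - u_s^N}_{H^2_p(V)} \le \norm{u(t_s) - P_N u(t_s)}_{H^2_p(V)} + \norm{P_N u(t_s) - u_s^N}_{H^2_p(V)},
\]
and the second summand equals $\norm{\sigma_s}_{H^2_p(V)}$, so that Lemma~\ref{lm:normsigmas} (valid since $1/4\le\beta\le1/2$) bounds it by $M((\Delta x)^2+(\Delta t)^2)$. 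It then remains to (i) estimate the projection error $\norm{u(t_s)-P_N u(t_s)}$, and (ii) make explicit how the constant furnished by Lemma~\ref{lm:normsigmas} depends on the data.

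For (i) I would integrate in time: writing $u(t_s) = u_0 + \int_0^{t_s} u_t(\tau)\,d\tau$ and using that $P_N$ commutes with $\partial_t$, we get $(I-P_N)u(t_s) = (I-P_N)u_0 + \int_0^{t_s}(I-P_N)u_t(\tau)\,d\tau$. Taking norms, applying Minkowski's inequality for integrals and then Lemma~\ref{lm:rhostima},
\[
\norm{u(t_s) - P_N u(t_s)} \le \norm{(I-P_N)u_0} + \int_0^{t_s}\norm{(I-P_N)u_t(\tau)}\,d\tau \le L\,N^{-2}\left(\norm{u_0}_{H^2_p(V)} + \norm{u_t}_{L^1(H^2_p(V),0,t_s)}\right).
\]
Since $N=\lfloor(b-a)/\Delta x\rfloor$, one has $N^{-2}\le C(\Delta x)^2$ for $\Delta x$ small, which already produces the first two terms on the right-hand side of~\eqref{eq:tdifference} at the rate $(\Delta x)^2$.

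For (ii) I would re-trace the Gronwall chain~\eqref{eq:Gronwall}--\eqref{eq:chain} in the proof of Lemma~\ref{lm:normsigmas}: the $(\Delta x)^2$ contribution to $\Gamma_s$, and hence to $\norm{\sigma_s}$, enters only through $\Gamma_0$ and through the projection errors $\norm{u_{tt}(t_s)-P_N u_{tt}(t_s)}$ appearing in~\eqref{eq:1right}. The former is estimated by Lemma~\ref{lm:sigma1}, giving the factor $(\Delta x)^2(\norm{u_0}_{H^2_p(V)}+\norm{v_0}_{H^2_p(V)})$; the latter is again handled by Lemma~\ref{lm:rhostima}, using $u_{tt}=\mathcal{L}(u)$ together with the boundedness of $\mathcal{L}$ on $H^2_p(V)$ (which holds when $C\in L^\infty(V)$), so that $\norm{(I-P_N)u_{tt}}\lesssim N^{-2}\norm{u}_{H^2_p(V)}$. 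The remaining $(\Delta t)^2$ term collects the Taylor remainders $R$ and $\tilde R$ from~\eqref{eq:Cauchy} and~\eqref{eq:calc}, whose constant depends on the $\mathcal{C}^3$-regularity of $u$ in time but not on $\Delta x$ or $\Delta t$. Combining the triangle inequality above with (i) and (ii) yields~\eqref{eq:tdifference}.

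The main difficulty here is bookkeeping rather than analysis: one must make sure that every projection error is measured in the norm in which Lemma~\ref{lm:rhostima} actually yields an $N^{-2}$ gain against an $H^2_p$-norm of the data (so that no regularity beyond $u_0,v_0\in H^2_p(V)$ and $u\in\mathcal{C}^3(H^2_p(V);[0,T])$ is silently used), and that the constant extracted from Lemma~\ref{lm:normsigmas} is organized so that the $(\Delta x)^2$ part is multiplied precisely by $\norm{u_0}_{H^2_p(V)}+\norm{u_t}_{L^1(H^2_p(V),0,t_s)}+\norm{v_0}_{H^2_p(V)}$, with the $(\Delta t)^2$ remainder kept separate.
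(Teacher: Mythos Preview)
Your proposal is correct and follows essentially the same route as the paper: split via $P_N u(t_s)$ with the triangle inequality, handle the projection error by writing $u(t_s)=u_0+\int_0^{t_s}u_t$ and applying Lemma~\ref{lm:rhostima}, and bound $\sigma_s$ by Lemma~\ref{lm:normsigmas}. The paper is in fact less detailed than your part~(ii): it simply cites Lemma~\ref{lm:normsigmas} (after recording the cases $s=0$ and $s=1$ via Lemma~\ref{lm:rhostima} and Lemma~\ref{lm:sigma1}) without re-opening the Gronwall chain to track how the data norms enter the constant, so your extra bookkeeping is more than what the paper actually does.
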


\begin{proof}
Using the triangular inequality, we have
\begin{equation}
\label{eq:triangular}
\norm{u(t_s) - u_s^N}_{H^2_p(V)} \le \norm{u(t_s)-P_N u(t_s)}_{H^2_p(V)} + \norm{u_s^N- P_N u(t_s)}_{H^2_p(V)}.
\end{equation}

Lemma~\ref{lm:rhostima} implies
\begin{align}
\label{eq:rhos}
\norm{u(t_s) - P_N u(t_s)}_{H^2_p(V)} &\le M\, (\Delta x)^2 \norm{u(t_s)}_{H^2_p(V)}\\
 &= M\,(\Delta x)^2\left(\norm{u_0}_{H^2_p(V)} + \int_0^{t_s}\norm{u_t(r)}_{H^2_p(V)}dr\right)\notag\\
&\le M\,(\Delta x)^2 \left(\norm{u_0}_{H^2_p(V)} + \norm{u_t}_{L^1(H^2_p(V),0,t_s)}\right).\notag
\end{align}

Now, we focus on the difference $\sigma_s=(u_s^N- P_N u(t_s))$. We start by considering the case $s=0$: Lemma~\ref{lm:rhostima} implies again
\begin{equation}
\label{eq:sigma0}
\norm{u_0^N- P_N u(t_0)}_{H^2_p(V)} = \norm{u_{n,0}- P_N u_0}_{H^2_p(V)} \le M\,(\Delta x)^2\norm{u_0}_{H^2_p(V)}.
\end{equation}

If $s=1$ and $t_1=\Delta t$, thanks to Lemma~\ref{lm:sigma1} we have
\begin{equation}
\label{eq:sigma1}
\norm{\sigma_1}_{H^2_p(V)} \le M (\Delta x)^2 \left(\norm{u_0}_{H^2_p(V)}+ \norm{v_0}_{H_p^2(V)}\right).
\end{equation}

We turn on the general case $s\ge1$. Lemma~\ref{lm:normsigmas} ensures that
\begin{equation}
\label{eq:sigmass}
\norm{u_s^N-P_Nu(t_s)}_{H^2_p(V)} = \norm{\sigma_s}_{H_p^2(V)} \le M ((\Delta x)^2 + (\Delta t)^2). 
\end{equation}

Therefore, using~\eqref{eq:rhos} and~\eqref{eq:sigmass} into~\eqref{eq:triangular}, we complete the proof.
\end{proof}

\subsection{Stability of the Newmark-$\beta$ method}
\label{sec:stability}

In this section, we prove the stability of the method by the energy method, showing that the norm of the numerical solution admits a sublinear behavior with respect to the time variable.

\begin{theorem}
\label{th:stab}

Let $\{u_s^N\}_{s=0}^{S_T}$ be the sequence generated by the method~\eqref{eq:newmarkb}. If $1/4 \le \beta\le 1/2$, then there exist two positive constants $M_0$ and $M_1$ such that
\begin{equation}
\label{eq:stab-estimate}
\norm{u^N_s}_{H_p^2(V)} \le M_1 + M_0 t_s,\qquad s=0,\cdots,S_T.
\end{equation}
\end{theorem}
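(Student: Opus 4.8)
The plan is to run an energy (Gronwall) argument at the discrete level, closely parallel to the proof of Lemma~\ref{lm:normsigmas}, but now applied directly to $u_s^N$ rather than to the error $\sigma_s$. First I would set $\psi_{s+\frac12} = \frac{u_{s+1}^N - u_s^N}{\Delta t}$ (the discrete velocity) and start from the trinomial form~\eqref{eq:trinomial} of the scheme, which for the linear case $w(\eta)=\eta$ reads
\[
\frac{u_{s+1}^N - 2u_s^N + u_{s-1}^N}{(\Delta t)^2} = \beta\mathcal{L}_N(u_{s+1}^N) + (1-2\beta)\mathcal{L}_N(u_s^N) + \beta\mathcal{L}_N(u_{s-1}^N).
\]
Taking the $H^2_p(V)$ inner product of this identity with $\psi_{s+\frac12} + \psi_{s-\frac12} = 2\,\frac{u_{s+\frac12}^N - u_{s-\frac12}^N}{\Delta t}$ (interpreting $u_{s\pm\frac12}^N$ as the appropriate averages, exactly as in~\eqref{eq:phi}), the left-hand term telescopes into $\frac{1}{\Delta t}\big(\norm{\psi_{s+\frac12}}^2_{H^2_p(V)} - \norm{\psi_{s-\frac12}}^2_{H^2_p(V)}\big)$, and the right-hand side, using that $-\mathcal{L}_N$ is symmetric positive with spectrum in $[\lambda_*,\lambda^*(\Delta x)^2]$ by Lemma~\ref{lm:spectral}, produces a telescoping difference of the quadratic forms $-4\beta\big(\mathcal{L}_N(u_{s+\frac12}^N),u_{s+\frac12}^N\big)$.

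Next I would define the discrete energy
\[
E_s = \norm{\psi_{s+\frac12}}^2_{H^2_p(V)} - 4\beta\big(\mathcal{L}_N(u_{s+\frac12}^N),\,u_{s+\frac12}^N\big),
\]
so that the inner-product identity becomes $E_s = E_{s-1}$, i.e.\ $E_s = E_0$ is conserved along the iteration (this is the unconditional-stability mechanism of Newmark-$\beta$ for $\beta\ge 1/4$). The crucial sign observation is that for $\beta\ge 1/4$ one has $-4\beta\big(\mathcal{L}_N v,v\big) \ge 0$ since $-\mathcal{L}_N$ is positive semidefinite; hence $E_s\ge 0$ and moreover $E_s \ge \norm{\psi_{s+\frac12}}^2_{H^2_p(V)}$ and $E_s \ge 4\beta\lambda_* \norm{u_{s+\frac12}^N}^2_{H^2_p(V)}$ (or the analogous bound after accounting for how $u_{s+\frac12}^N$ relates to $u_{s+1}^N$, $u_s^N$). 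From $E_s = E_0$, bounded in terms of the initial data $u_0$, $v_0$ and $\Delta t$, I would conclude that the discrete velocities $\norm{\psi_{s+\frac12}}_{H^2_p(V)}$ are uniformly bounded by a constant $M_0$ independent of $s$.

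Finally, I would telescope the displacement: $u_s^N = u_0^N + \Delta t\sum_{j=0}^{s-1}\psi_{j+\frac12}$, so by the triangle inequality $\norm{u_s^N}_{H^2_p(V)} \le \norm{u_0^N}_{H^2_p(V)} + \Delta t\sum_{j=0}^{s-1}\norm{\psi_{j+\frac12}}_{H^2_p(V)} \le M_1 + M_0\,(s\Delta t) = M_1 + M_0 t_s$, which is exactly~\eqref{eq:stab-estimate}. The main obstacle I anticipate is the bookkeeping around the half-integer averages $u_{s+\frac12}^N$ and $\psi_{s+\frac12}$: one must verify that the cross terms from the $(1-2\beta)$ and $\beta$ weights in~\eqref{eq:trinomial} recombine cleanly into the telescoping form of $E_s$ (this is where $\beta \ge 1/4$ enters, guaranteeing the correct sign), and that the resulting lower bound on $E_s$ genuinely controls $\norm{u_s^N}_{H^2_p(V)}$ and not just a difference; the positivity of $-\mathcal{L}_N$ from Lemma~\ref{lm:spectral} is what makes this work, and the linearity of $\mathcal{L}_N$ (the restriction $w(\eta)=\eta$) is essential since otherwise the quadratic-form manipulation fails.
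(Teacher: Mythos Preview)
Your proposal follows essentially the same route as the paper: test the trinomial recurrence~\eqref{eq:trinomial} against $\psi_{s+\frac12}+\psi_{s-\frac12}$, obtain a discrete energy bounded by initial data, use the positivity of $-\mathcal{L}_N$ from Lemma~\ref{lm:spectral} to extract a uniform bound on $\norm{\psi_{s+\frac12}}_{H^2_p(V)}$, and then telescope $u_s^N=u_0^N+\Delta t\sum_{j}\psi_{j+\frac12}$ to reach~\eqref{eq:stab-estimate}. One caveat: the specific energy you write, $E_s=\norm{\psi_{s+\frac12}}^2-4\beta(\mathcal{L}_N u^N_{s+\frac12},u^N_{s+\frac12})$, is \emph{exactly} conserved only for $\beta=\tfrac14$; for $\beta>\tfrac14$ the identity leaves a remainder $(1-4\beta)(\mathcal{L}_N u_s^N,\psi_{s+\frac12}+\psi_{s-\frac12})$ of indefinite sign, and the conserved quantity is instead $\norm{\psi_{s+\frac12}}^2-(\mathcal{L}_N u^N_{s+\frac12},u^N_{s+\frac12})-(\beta-\tfrac14)(\Delta t)^2(\mathcal{L}_N\psi_{s+\frac12},\psi_{s+\frac12})$, whose extra term is again nonnegative precisely when $\beta\ge\tfrac14$. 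With this correction your argument goes through unchanged, and this is exactly the ``bookkeeping around the half-integer averages'' you flagged as the main obstacle.
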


\begin{proof}
Let us define the test function $\psi^N_{s+\frac{1}{2}} = \frac{u_{s+1}^N - u_s^N}{\Delta t}$, and     
\[u_{s+1/2}^N = \frac{u_{s+1}^N+ u_{s}^N}{2},\qquad \text{for $s=0,\dots, S_T -1$.}\]
so
\begin{equation}
\label{eq:psi}
\frac{\psi^N_{s+\frac{1}{2}} - \psi^N_{s-\frac{1}{2}}}{\Delta t} = \frac{u_{s+1}^N - 2 u_s^N + u_{s-1}^N}{(\Delta t)^2},\qquad \psi^N_{s+\frac{1}{2}} + \psi^N_{s-\frac{1}{2}} = \frac{u_{s+1}^N - u_{s-1}^N}{\Delta t}.
\end{equation}

If we take the inner product between the equation~\eqref{eq:trinomial} and $\left(\psi^N_{s+\frac{1}{2}} + \psi^N_{s-\frac{1}{2}}\right)$, we obtain the following energy equation
\begin{align}
\label{eq:energy}
&\left(\frac{\psi^N_{s+\frac{1}{2}} - \psi^N_{s-\frac{1}{2}}}{\Delta t},\ \psi^N_{s+\frac{1}{2}} + \psi^N_{s+\frac{1}{2}}\right) - \beta\left(\mathcal{L}_N(u_{s+1}^N),\ \psi^N_{s+\frac{1}{2}} + \psi^N_{s+\frac{1}{2}}\right)\\
&\qquad - (1-2\beta)\left(\mathcal{L}_N(u_s^N),\ \psi^N_{s+\frac{1}{2}} + \psi^N_{s+\frac{1}{2}}\right) - \beta \left(\mathcal{L}_N(u_{s-1}^N),\ \psi^N_{s+\frac{1}{2}} + \psi^N_{s+\frac{1}{2}}\right) =0.\notag
\end{align}

Thanks to the spectral properties of the discrete peridynamic operator $\mathcal{L}_N$, we get
\begin{equation}
\label{eq:lnpsi}
\left(\mathcal{L}_N(u^N_{s+\frac{1}{2}}) + \mathcal{L}_N(u^N_{s-\frac{1}{2}}),\psi^N_{s+\frac{1}{2}}+\psi^N_{s-\frac{1}{2}}\right) = \left(\mathcal{L}_N(u^N_{s+\frac{1}{2}}),u^N_{s+\frac{1}{2}}\right) - \left(\mathcal{L}_N(u^N_{s-\frac{1}{2}}),u^N_{s-\frac{1}{2}}\right).
\end{equation}

Then, using~\eqref{eq:energy} and~\eqref{eq:lnpsi}, we have
\begin{align}
\label{eq:ener}
\left(\frac{\psi^N_{s+\frac{1}{2}} - \psi^N_{s-\frac{1}{2}}}{\Delta t},\ \psi^N_{s+\frac{1}{2}} + \psi^N_{s-\frac{1}{2}}\right) &-2\beta\left(\mathcal{L}_N(u^N_{s+\frac{1}{2}}),u^N_{s+\frac{1}{2}}\right)\\
& - 2\beta\left(\mathcal{L}_N(u^N_{s-\frac{1}{2}}),u^N_{s-\frac{1}{2}}\right) \le 0.\notag
\end{align}

Multiplying both sides of~\eqref{eq:ener} by $\left(\Delta t\right)^3$ and adding $u_s^N - u_s^N =0$ in the inner product of the first term in the left hand side of~\eqref{eq:ener} and finally using the definition of $\psi^N_{s+\frac{1}{2}}$, we find
\begin{align}
\label{eq:rearrange}
\left(u_{s+1}^N-u_s^N,u_{s+1}^N - u_s^N\right) &- \beta\left(\mathcal{L}_N(u^N_{s+\frac{1}{2}}), u^N_{s+\frac{1}{2}}\right)\\
&\le \left(u_{s}^N-u_{s-1}^N,u_{s}^N - u_{s-1}^N\right) - \beta\left(\mathcal{L}_N(u^N_{s-\frac{1}{2}}),u^N_{s-\frac{1}{2}}\right).\notag
\end{align}

Therefore, dividing both sides by $(\Delta t)^2$ and using a recurrence argument we obtain
\begin{align*}
&\left(\frac{u_{s+1}^N-u_s^N}{\Delta t},\frac{u_{s+1}^N - u_s^N}{\Delta t}\right) - \frac{\beta}{(\Delta t)^2}\left(\mathcal{L}_N(u^N_{s+\frac{1}{2}}), u^N_{s+\frac{1}{2}}\right)\\
&\quad\le  \left(\frac{u_1^N-u_0^N}{\Delta t},\frac{u_1^N - u_0^N}{\Delta t}\right) - \frac{\beta}{(\Delta t)^2}\left(\mathcal{L}_N\left(u^N_{\frac{1}{2}}\right),u^N_{\frac{1}{2}}\right).\\
\end{align*}
Hence,
\begin{equation}
\label{eq:concl}
\norm{\frac{u_{s+1}^N-u_s^N}{\Delta t}}_{H_p^2(V)}^2 -\frac{\beta}{(\Delta t)^2} \left(\mathcal{L}_N(u^N_{s+\frac{1}{2}}), u^N_{s+\frac{1}{2}}\right)\le M_0,
\end{equation}
where $M_0 = \norm{\frac{u_1^N-u_0^N}{\Delta t}}_{H_p^2(V)}^2- \frac{\beta}{(\Delta t)^2}\left(\mathcal{L}_N\left(u^N_{\frac{1}{2}}\right),u^N_{\frac{1}{2}}\right) \ge 0$,
as Lemma~\ref{lm:spectral} ensures that $-\left(\mathcal{L}_N(\omega),\omega\right) \ge 0$ for all $\omega$.

We notice that $\norm{\frac{u_{s+1}^N-u_s^N}{\Delta t}}_{H_p^2(V)}^2$ and $-\frac{\beta}{(\Delta t)^2} \left(\mathcal{L}_N(u^N_{s+\frac{1}{2}}), u^N_{s+\frac{1}{2}}\right)$ are positives. As a consequence, thanks to~\eqref{eq:concl} we find
\[
\norm{u_{s+1}^N-u_s^N}_{H_p^2(V)}\le M_0 \Delta t,
\]
and
\begin{align*}
\norm{u^N(t_s+\Delta t)}_{H_p^2(V)} &\le \norm{u^N(t_s)}_{H_p^2(V)} + M_0\Delta t \\
&\le \norm{u^N(t_s - \Delta t)}_{H_p^2(V)} + 2M_0\Delta t\\
&\le\cdots\le \norm{u^N(0)}_{H_p^2(V)} + M_0 \left(t_s + \Delta t\right).
\end{align*}
Thus, if we choose $M_1=\norm{u^N(0)}_{H_p^2(V)}$, we get the thesis.
\end{proof}


\section{Numerical tests}
\label{sec:test}

In this section we validate the proposed scheme and collect several simulations in order to investigate the properties of the solutions of the nonlinear peridynamic model~\eqref{eq:nonlinperid}.
\subsection[Validation of spectral semi-discretization scheme]{Validation of spectral semi-discretization scheme}
\label{sec:validation}

To validate the results of the peridynamic scheme, we implement the following 2D benchmark problem. We consider a thin lamina in the spatial domain $[0,1]\times[0,1]$ and we discretize it with a bi-dimensional mesh using the same space step $\Delta x = 0.01$ on both directions. We assume that the lamina is subjected to the uniform initial displacement $u_0(x_1,x_2) = -0.5 x_1-0.5x_2$, and we fix $\delta =0.2$ as horizon. 

We take the micromodulus function $C(x_1,x_2) =\exp{(-x_1^2-x_2^2)}$, and we choose $w(\eta) = \eta^r$, with $r=3$. Moreover, we assume that the body is not subjected to external forces, namely $b\equiv 0$ and the constant density of the body is $\rho(x_1,x_2) = 1$. For the implementation of the Newmark-$\beta$ method, we take $\beta=1/4$.

\begin{figure}
\centering
\begin{subfigure}[b]{.48\textwidth}
\includegraphics[width=\textwidth]{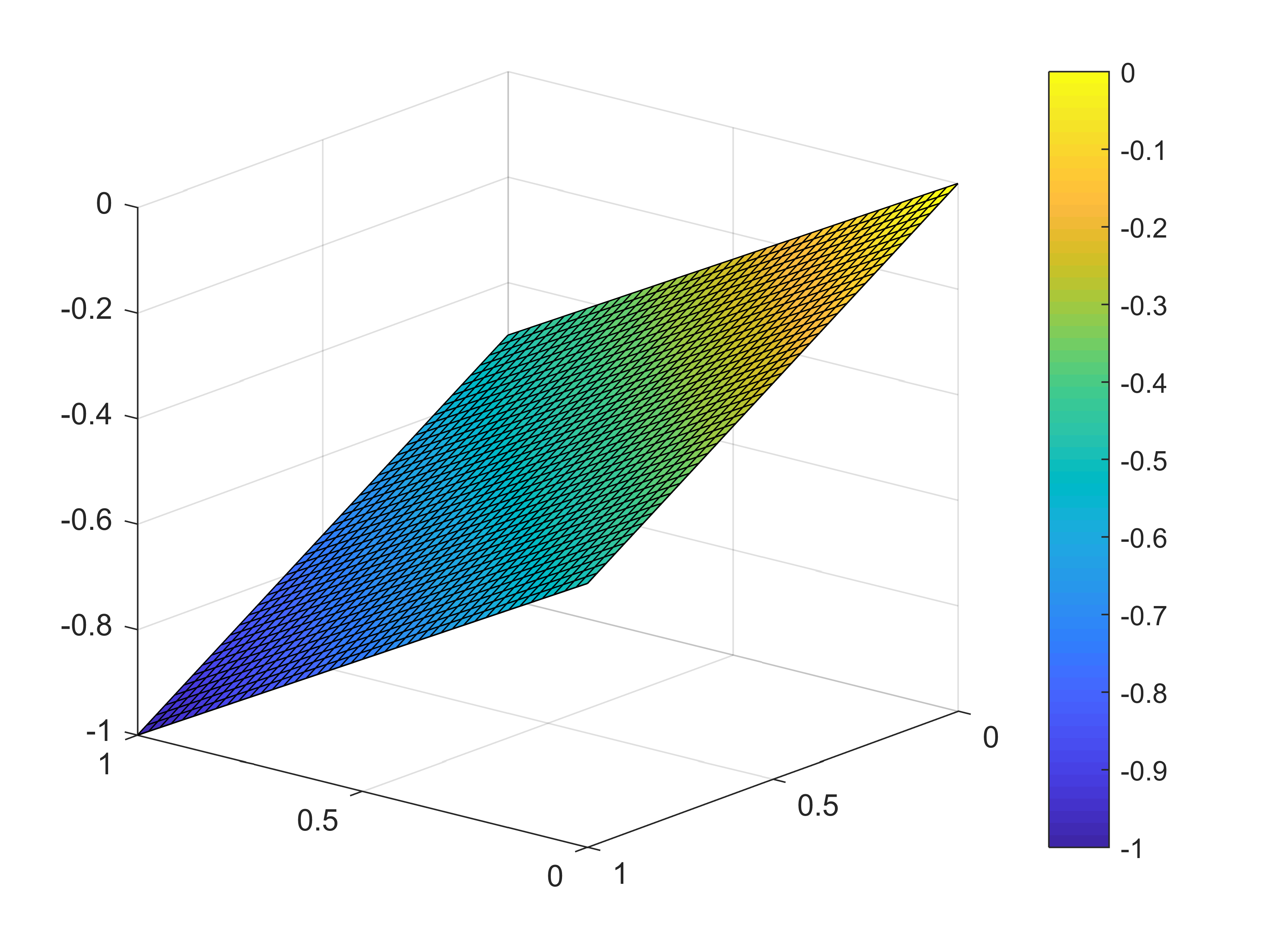}
\caption*{
$t=0$.}
\end{subfigure}
\begin{subfigure}[b]{.48\textwidth}
\includegraphics[width=\textwidth]{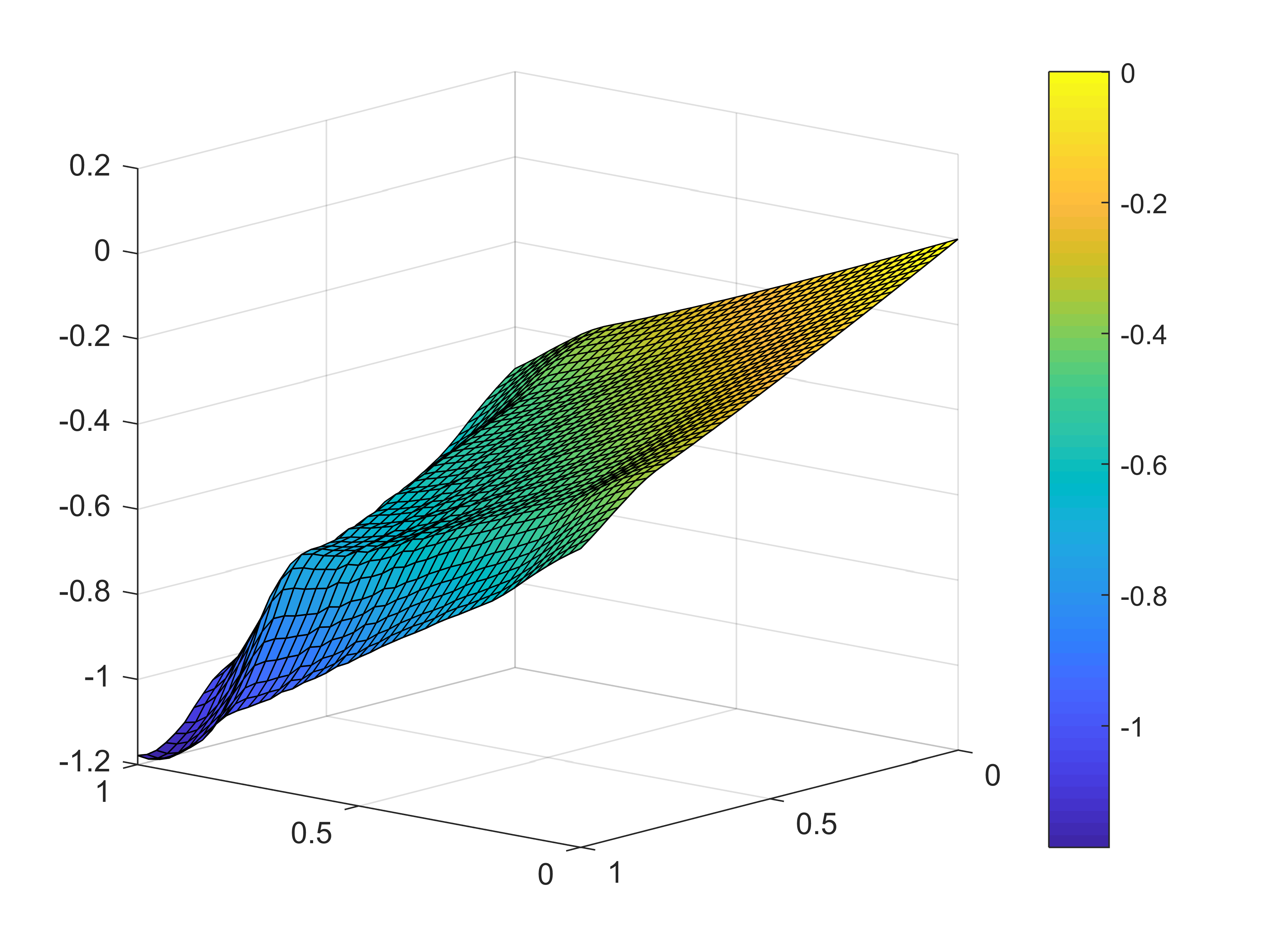}
\caption*{
$t=5$.}
\end{subfigure}
\\
\begin{subfigure}[b]{.48\textwidth}
\includegraphics[width=\textwidth]{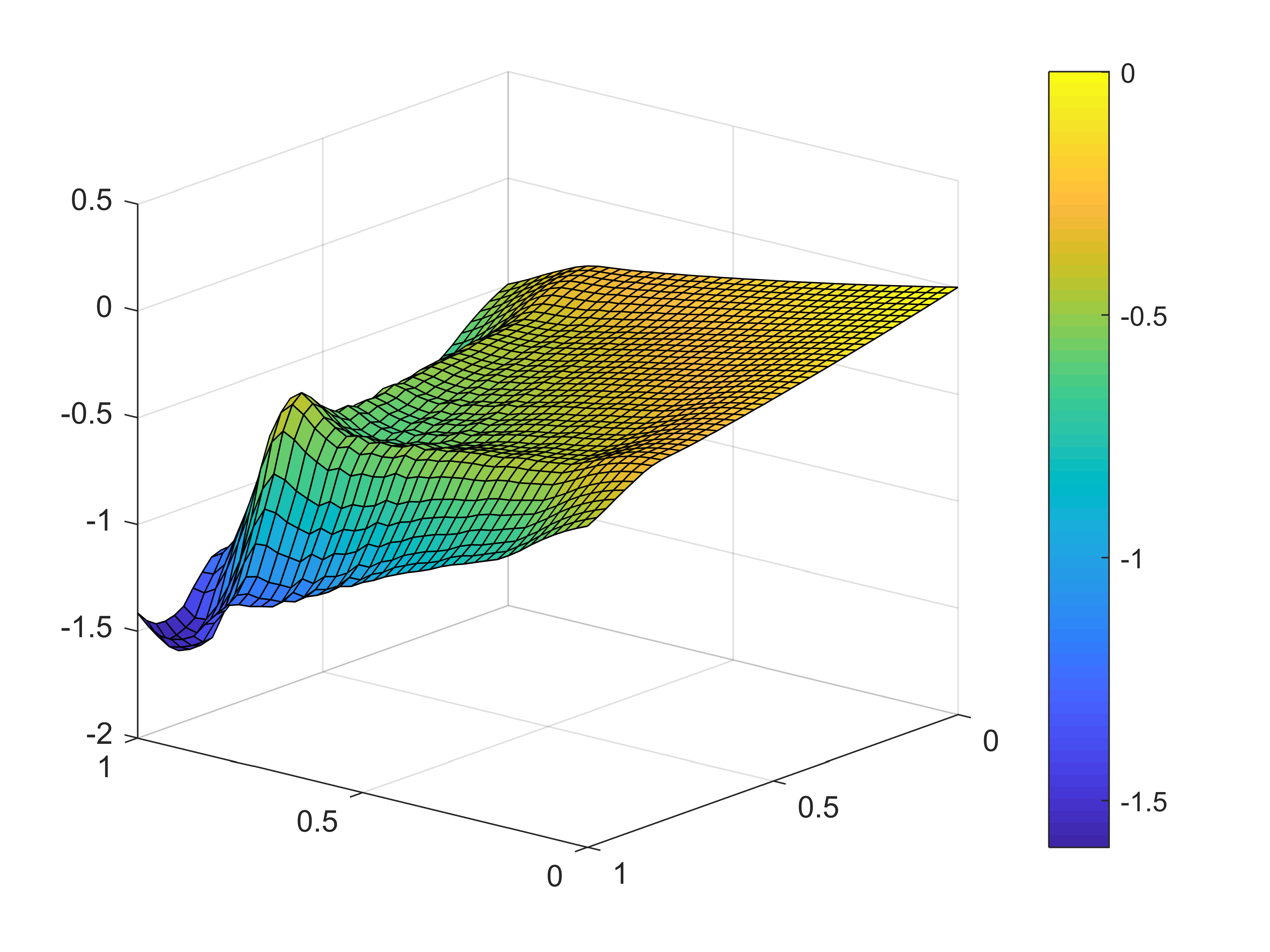}
\caption*{
$t=7.5$.}
\end{subfigure}
\begin{subfigure}[b]{.48\textwidth}
\includegraphics[width=\textwidth]{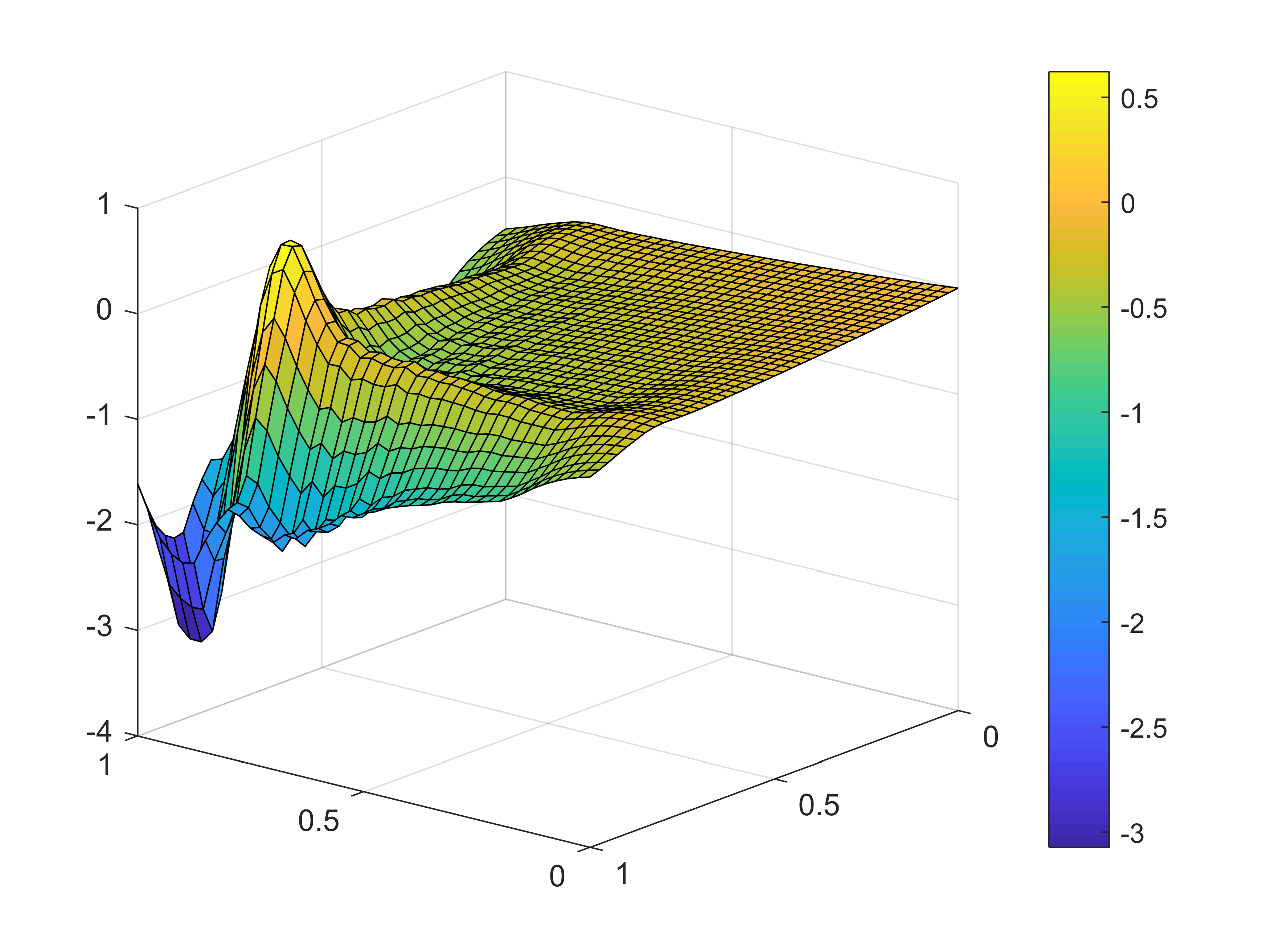}
\caption*{$t=10$.}
\end{subfigure}
\caption{The initial condition and the solution at times $t=2.5$,$t=5$, $t=7.5$ and $t=10$. The parameters for the simulation are $\delta=0.2$, $\Delta x=10^{-2}$, and $\Delta t=10^{-4}$.}
\label{fig:evolution}
\end{figure}

In Figure~\ref{fig:evolution}, we plot the initial condition and the behavior of the solution in the spatial domain as time evolves. The convergence of the fully-discrete scheme is evaluated by computing the relative error in the discrete $L^2(V)$ norm at time $t_s$:
\[
E^{t_s}_{L^2} = \frac{\sum_{n}\left|u^N_{n,s}- u^*(x_n,t_s)\right|^2}{\sum_{n}\left|u^N_{n,s}\right|^2},
\]
where $u^*$ denotes the reference solution for the problem. 

We notice that finding an exact solution of a non-linear problem is a not trivial issue. Therefore, in this work we determine $u^*$ using our scheme with a finer mesh.

In Table~\ref{tab:error} we choose a very small time step $\Delta t= 10^{-4}$ and we depict the error $E^{t_s}_{L^2}$ between the exact solution and the numerical one for different value of the space step $\Delta x$ at time $t_s=5$. We can also observe that the rate of convergence of the scheme seems in accordance with the theoretical results about the accuracy of the method.

\begin{table}%
\centering%
\renewcommand\arraystretch{1.3}
\begin{tabular}{ccc
}
\toprule
$\Delta x$& $E_{L^2}^{t_s}$ &convergence rate
\\
\midrule
$0.2$&$5.1194 \times 10^{-1}$&$-$
\\
$0.1$&$6.8616\times 10^{-2}$&$2.8994$
\\
$0.05$&$1.2494\times 10^{-2}$&$2.6783$
\\
$0.025$&$2.1966\times10^{-3}$&$2.6051$
\\
$0.0125$&$54138\times10^{-4}$&$2.4735$
\\
\bottomrule
\end{tabular}
\renewcommand\arraystretch{1}
\caption{The relative $L^2$-error at time $t_s=5$ as function of the space step, for $\Delta t = 10^{-4}$.}
\label{tab:error}
\end{table}

Additionally, we perform a convergence analysis also with respect to the time step. Using the same setting and data as before, we fix $\Delta x = 10^{-4}$ and we compute the error $E^{t_s}_{L^2}$ between the exact solution and the numerical one for different values of the time step $\Delta t$ at time $t_s=5$. Table~\ref{tab:error-time} shows the values of the relative error and that the convergence rate seems to be in accordance with the theoretical results.

\begin{table}%
\centering%
\renewcommand\arraystretch{1.3}
\begin{tabular}{ccc
}
\toprule
$\Delta t$& $E_{L^2}^{t_s}$ &convergence rate
\\
\midrule
$0.1$&$1.1271 \times 10^{-6}$&$-$
\\
$0.05$&$2.0212\times 10^{-7}$&$2.4793$
\\
$0.025$&$7.3230\times 10^{-8}$&$1.9720$
\\
$0.01$&$4.9246\times10^{-9}$&$2.2943$
\\
$0.005$&$6.5037\times10^{-10}$&$2.4587$
\\
\bottomrule
\end{tabular}
\renewcommand\arraystretch{1}
\caption{The relative $L^2$-error at time $t_s=5$ as function of the time step, for $\Delta x = 10^{-4}$.}
\label{tab:error-time}
\end{table}

Moreover, in order to overcome the limitation of periodic boundary conditions due to the spectral spatial discretization, we use a volume penalization technique. We recall that the penalization procedure extend the computational domain $V$ to a fictitious one $\Omega$ by a factor $\mu>0$, in order that
\[
\Omega = V \cup \Gamma,
\]
where $\Gamma$ denotes the constrained domain, see Figure~\ref{fig:penal} . It imposes the periodic boundary conditions to the extended domain and then penalizes the solution on $\Gamma$ by means of a penalization term, which depends on a factor $\eps>0$ called penalization factor. It results that the penalization term converges to zero as the penalization factor $\eps$ goes to zero. For a complete description of the technique in the one-dimensional case, we refer the reader to~\cite{LP}.

We validate the spectral method with volume penalization by making a comparison between the exact solution and the numerical one. We work in the same setting as before and we fix $\eps=0.2$ as penalization factor. Table~\ref{tab:error-penal} summarizes the error study.

\begin{figure}[!htbp]
\centering
\includegraphics[width=0.4\textwidth]{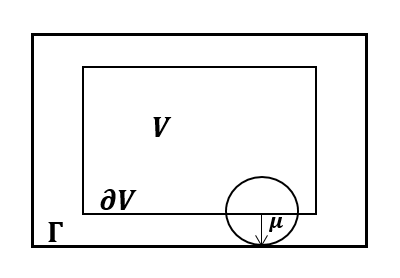}
\caption{With reference to Section~\ref{sec:validation}: the reference domain $V$, its boundary $\partial V$ and the constrained domain $\Gamma$. The fictitious domain is $\Omega=V\cup \Gamma$.}
\label{fig:penal}
\end{figure}

\begin{table}%
\centering%
\renewcommand\arraystretch{1.3}
\begin{tabular}{ccc
}
\toprule
$\Delta x$& $E_{L^2}^{t_s}$ &convergence rate
\\
\midrule
$0.2$&$7.8142\times 10^{-1}$&$-$
\\
$0.1$&$1.2049\times10^{-1}$&$2.6972$
\\
$0.05$&$2.5370\times 10^{-2}$&$2.4725$
\\
$0.025$&$6.1826\times 10^{-3}$&$2.3193$
\\
$0.01$&$8.2514\times 10^{-4}$&$2.2570$
\\
\bottomrule
\end{tabular}
\renewcommand\arraystretch{1}
\caption{With reference to Section~\ref{sec:validation}, the relative $L^2$-error corresponding to the spectral method with volume penalization at time $t_s=5$ as function of the space step, for $\Delta t = 10^{-4}$.}
\label{tab:error-penal}
\end{table}

\subsection{Comparison between Newmark-$\beta$ and St\"ormer-Verlet methods}
\label{sec:compare}

In this section we test the performance of the two methods with respect to the time step $\Delta t$. For a description of the St\"ormer-Verlet method, we refer the reader to~\cite{Galvanetto2016,LP,CFLMP}.

We take under consideration a thin lamina in the spatial domain $[0,1]\times[0,1]$ and we discretize it with a bi-dimensional mesh using the same space step $\Delta x = 0.01$ on both directions. We choose $u_0(x_1,x_2) = -0.5 x_1-0.5x_2$ as initial condition, and we fix $\delta =0.2$ as horizon. We choose the same parameters as in the previous test. 

In Table~\ref{tab:compare}, we compute the relative $L^2$ error at time $t_s=5$ between the exact solution and the numerical one obtained both with the Newmark-$\beta$ method and the St\"ormer Verlet method for decreasing time step values. We can observe that the Newmark-$\beta$ method allows us to have the same accuracy of the St\"ormer-Verlet scheme, but using a greater time step.

\begin{table}%
\centering%
\renewcommand\arraystretch{1.3}
\begin{tabular}{ccc
}
\toprule
\multirow{2}*{$\Delta t$}& \multicolumn{2}{c}{$E_{L^2}^{t_s}$} \\ \cmidrule(lr){2-3}
& Newmark-$\beta$ & St\"ormer-Verlet\\

\midrule
$0.1$&$4.6812 \times 10^{-6}$&$5.1511\times 10^{-4}$
\\
$0.05$&$2.9276\times 10^{-7}$&$3.4650\times 10^{-5}$
\\
$0.01$&$4.6629\times 10^{-9}$&$3.4634\times 10^{-5}$
\\
$0.005$&$2.8706\times10^{-10}$&$3.0868\times 10^{-6}$
\\
$0.001$&$2.6360\times10^{-12}$&$2.3476 \times 10^{-7}$
\\
\bottomrule
\end{tabular}
\renewcommand\arraystretch{1}
\caption{With reference to Section~\ref{sec:compare}, the relative $L^2$-error at time $t_s=5$ as function of the time step, for $\Delta x = 0.01$.}
\label{tab:compare}
\end{table}

\subsection{The case of a discontinuous initial condition}
\label{sec:jump}
In this section, we study the behavior of the solution when the initial condition is discontinuous. We consider $[0,1]\times[0,1]$ as domain of computation. We take the micromodulus function as in the sections above, $r=3$ and we fix the size of the horizon as $\delta = 0.2$. We choose a jump-type discontinuity $u_0(x_1,x_2) = \chi_{[1/2,1]\times [1/2,1]}(x_1,x_2)$, and $v_0(x_1,x_2)=0$ as initial condition. Figure~\ref{fig:3djump} shows the evolution of the solution at times $t=3.5$, $t=4$, $t=5.5$ and $t=6.5$. We can notice the formation of waves travelling with different phase speeds, with an increasing amplitude and a decreasing wavelength propagating from the discontinuous point. The parameters for the simulation are $\Delta x = 10^{-2}$, $\Delta t =10^{-4}$ and $\beta =1/4$.

\begin{figure}
\centering
\begin{subfigure}[b]{.48\textwidth}
\includegraphics[width=\textwidth]{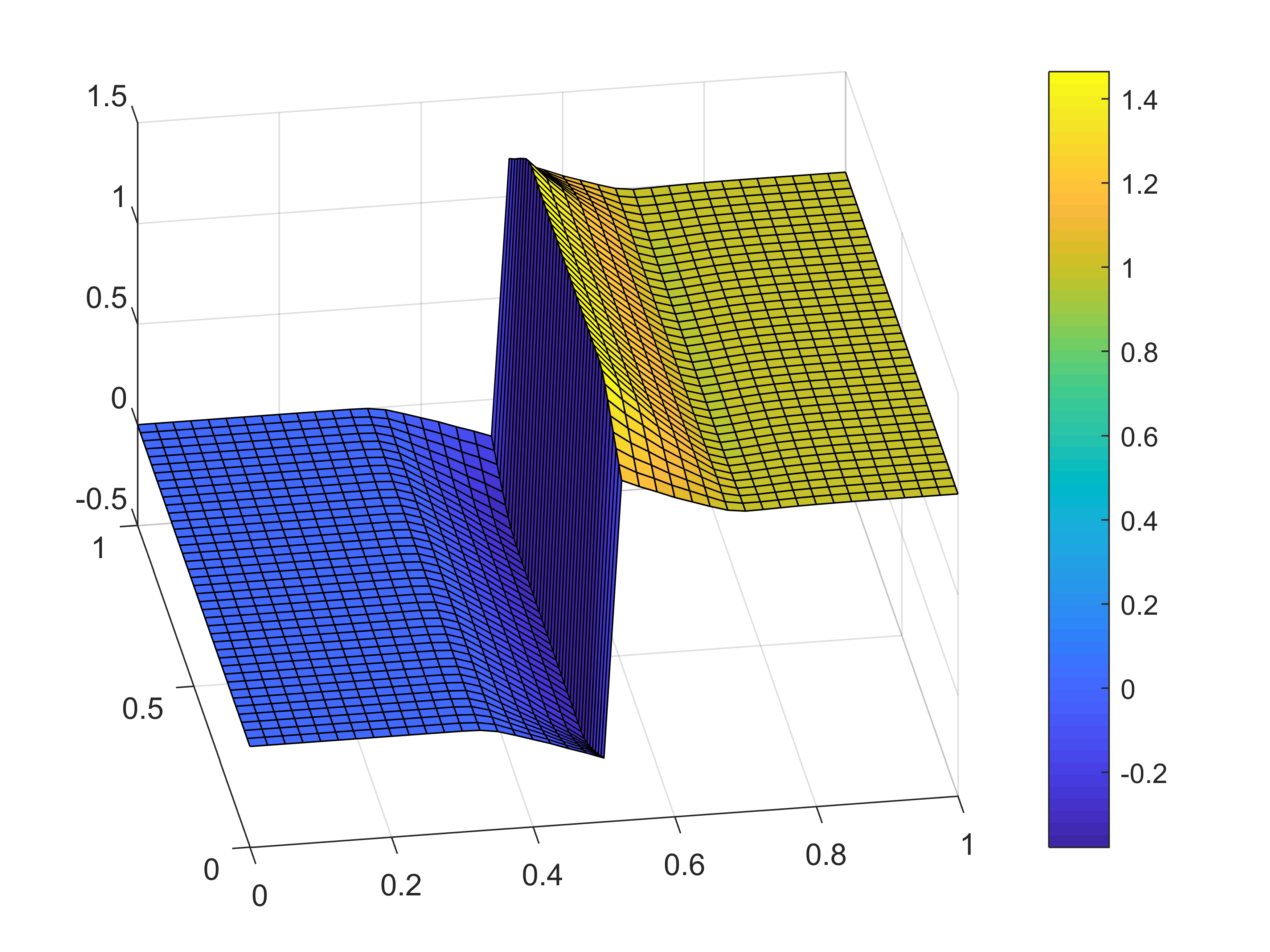}
\caption*{$u(x,3.5)$}
\end{subfigure}
\begin{subfigure}[b]{.48\textwidth}
\includegraphics[width=\textwidth]{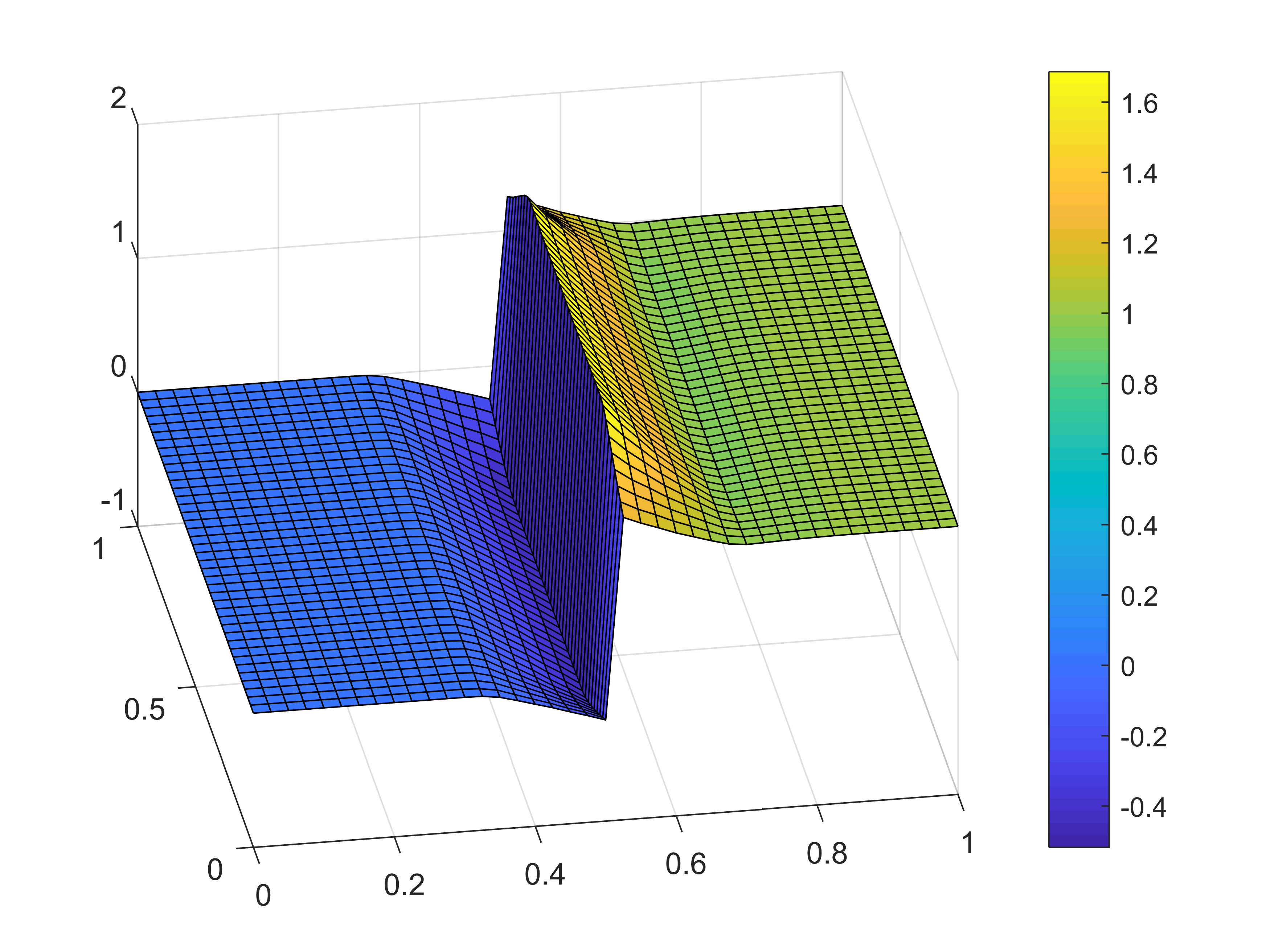}
\caption*{$u(x,4)$}
\end{subfigure}
\begin{subfigure}[b]{.48\textwidth}
\includegraphics[width=\textwidth]{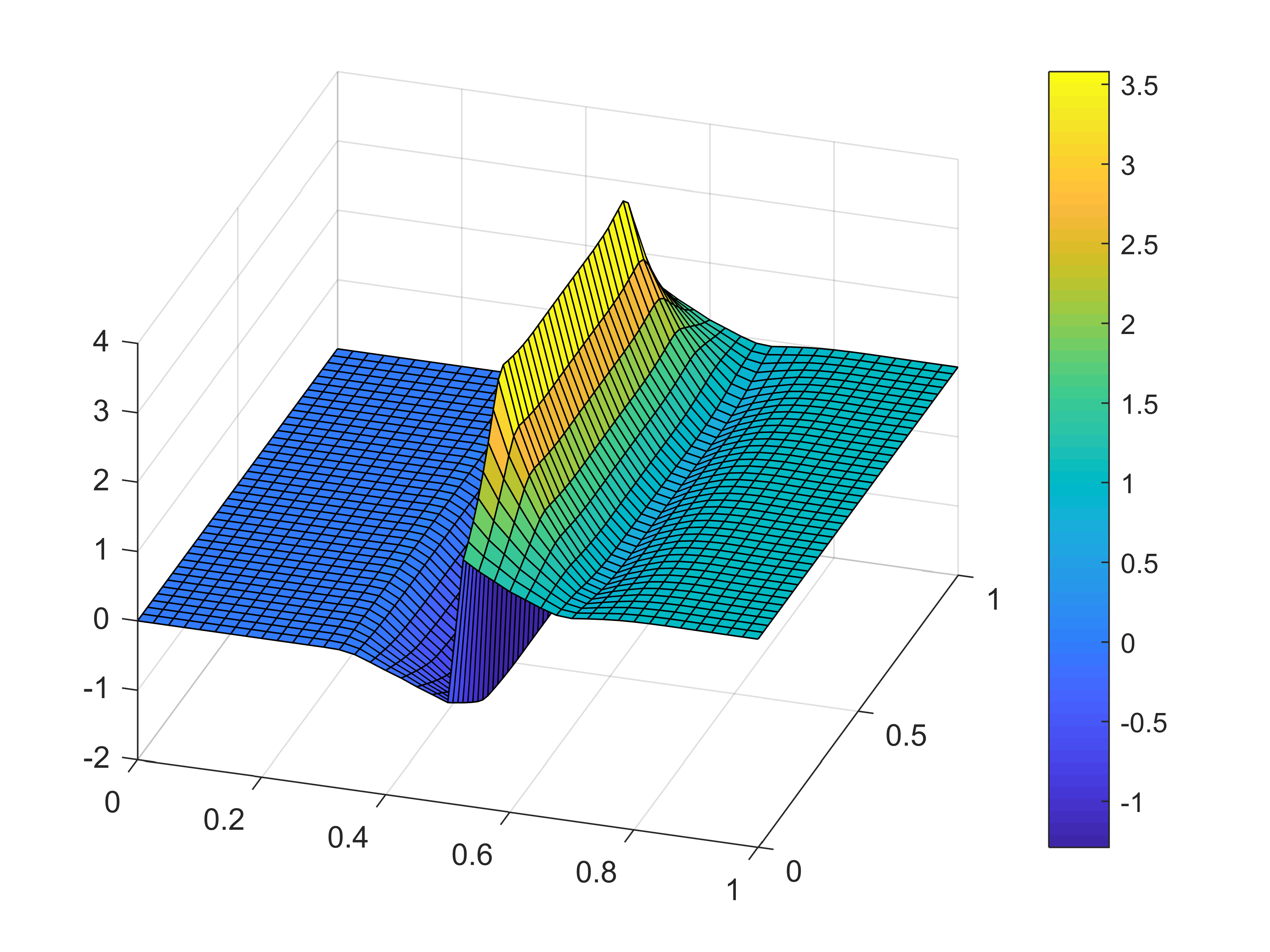}
\caption*{$u(x,5.5)$}
\end{subfigure}
\begin{subfigure}[b]{.48\textwidth}
\includegraphics[width=\textwidth]{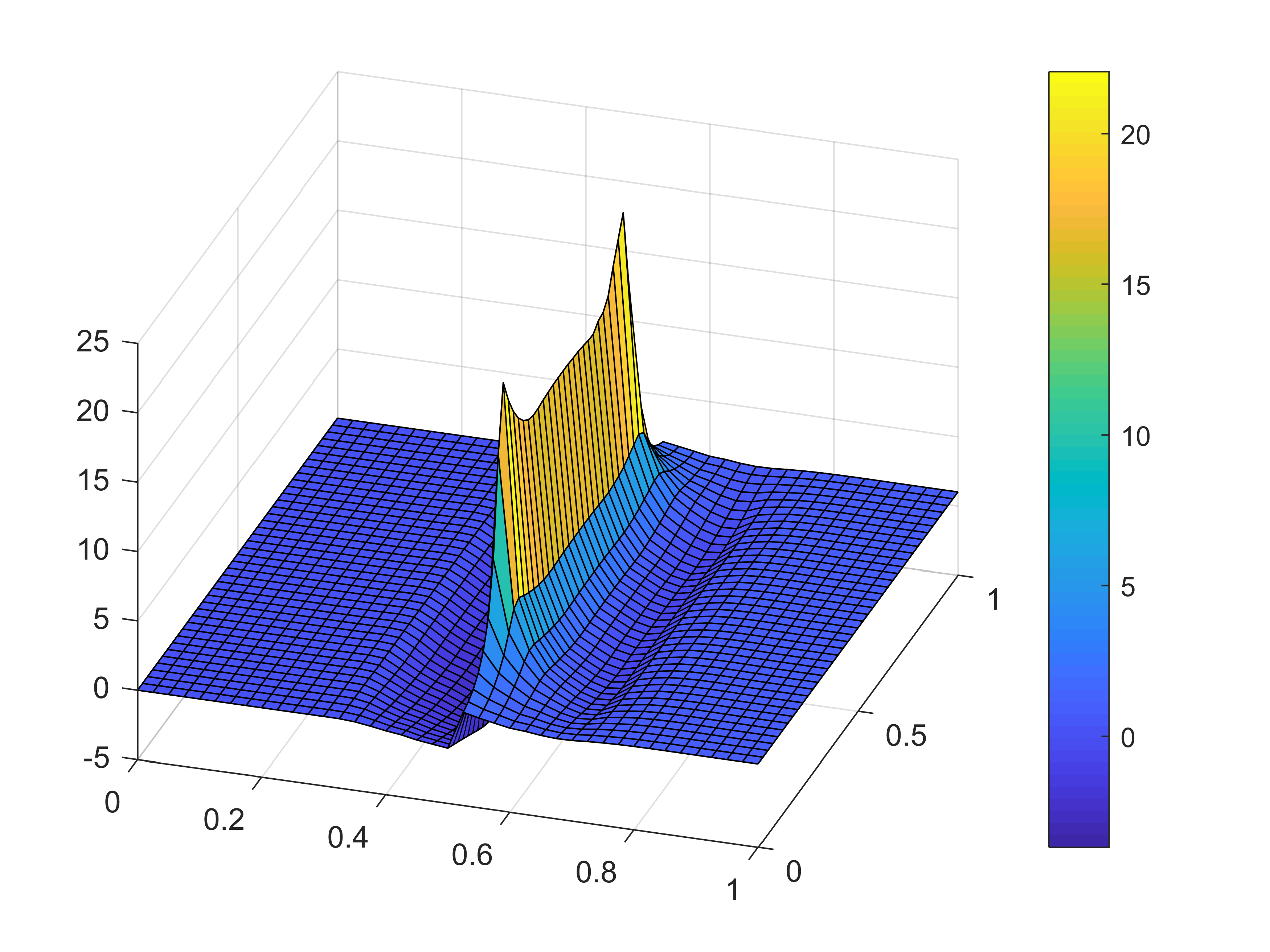}
\caption*{$u(x,6.5)$}
\end{subfigure}
\caption{With reference to Section~\ref{sec:jump}: the evolution of the solution for $\delta=0.2$ corresponding to the initial condition $u_0(x) = \chi_{[1/2,1]\times [1/2,1]}$, $v_0(x)=0$. The parameters for the computed solution are $\beta =1/4$, $\Delta x = 10^{-2}$ and $\Delta t =10^{-4}$.}
\label{fig:3djump}
\end{figure}

Moreover, we perform an error analysis also in this case. Table~\ref{tab:error-discontinuous} shows the lost an order of convergence due to the presence of a discontinuity in the initial condition.

\begin{table}%
\centering%
\renewcommand\arraystretch{1.3}
\begin{tabular}{ccc
}
\toprule
$\Delta x$& $E_{L^2}^{t_s}$ &convergence rate
\\
\midrule
$0.2$&$3.4418\times 10^{-1}$&$-$
\\
$0.1$&$1.2339\times10^{-1}$&$1.4799$
\\
$0.05$&$5.9369\times 10^{-2}$&$1.2677$
\\
$0.025$&$2.5997\times 10^{-2}$&$1.2236$
\\
$0.0125$&$8.2514\times 10^{-4}$&$2.2570$
\\
\bottomrule
\end{tabular}
\renewcommand\arraystretch{1}
\caption{With reference to Section~\ref{sec:jump}, the relative $L^2$-error at time $t_s=5$ as function of the space step, for $\Delta t = 10^{-4}$.}
\label{tab:error-discontinuous}
\end{table}

\section{Conclusions and future works}
\label{sec:conclusion}

In this paper a new bi-dimensional peridynamic discretization model has been proposed. It is based on a spectral Fourier discretization for the spatial domain and the implementation of the Newmark-$\beta$ method for the time marching. We have recalled a convergence result for the semi-discrete problem and we have proved the convergence of the fully-discrete linear problem. 

Our results shows that spectral techniques perform very well in the nonlinear case and the Newmark-$\beta$ method allows us to have a good accuracy without using a too small time step.

In future, we would like to extend the analytical result on the convergence of the fully discrete scheme to the nonlinear case. Moreover, we aim to couple our approach to techniques based on finite element methods or mimetic finite difference methods (see for example~\cite{Beirao_Lopez_Vacca_2017,Lopez_Vacca_2016}), following the same strategy proposed in~\cite{Galvanetto2016}.

\section*{Acknowledgements}
This paper has been partially supported by GNCS of Istituto Nazionale di Alta Matematica and by PRIN 2017 ``Discontinuous dynamical systems: theory, numerics and applications'' coordinated by Nicola Guglielmi.


\bibliographystyle{plain}
\bibliography{biblioPeri2}

\end{document}